\theoremstyle{plain}
\newtheorem{lemma}{Lemma}
\newtheorem{theorem}[lemma]{Theorem}
\newtheorem{corollary}[lemma]{Corollary}
\theoremstyle{definition}
\newtheorem{remark}{Remark}
\newcommand{\N}{\mathbb{N}}
\newcommand{\R}{\mathbb{R}}
\newcommand{\C}{\mathbb{C}}
\newcommand{\CL}{\mathcal{L}}
\newcommand{\CM}{\mathcal{M}}
\newcommand{\dom}{\operatorname{Dom}}
\newcommand{\Span}{\operatorname{Span}}
\newcommand{\Spec}{\operatorname{Spec}}
\newcommand{\dist}{\operatorname{dist}}
\newcommand{\ud}{\,\mathrm{d}}
\renewcommand{\Re}{\operatorname{Re}}
\renewcommand{\Im}{\operatorname{Im}}
\numberwithin{equation}{section}
\def \p{\partial}
\def \e{\varepsilon}
\def \rmi{{\rm i}}
\title[Non-variational computation of eigenstates]{Non-variational computation \\ of the eigenstates 
of {D}irac operators \\ with radially symmetric potentials}
\begin{document}

\author{Lyonell Boulton}
\address{Department of Mathematics and Maxwell Institute for Mathematical 
Sciences, Heriot-Watt University, Edinburgh
EH14 4AS, United Kingdom}
\email{L.Boulton@hw.ac.uk}

\author{Nabile Boussaid}
\address{D\'epartement de Math\'ematiques, UFR Sciences et techniques - 
16, route de Gray - 25 030, Besan\c{c}on 
cedex, France}
\email{nabile.boussaid@univ-fcomte.fr}
\subjclass[2000]{}
\keywords{Dirac operators, quadratic projection methods, spectral pollution}
\date{July 2008}

\begin{abstract}
We discuss a novel strategy for computing the eigenvalues and eigenfunctions of the relativistic 
Dirac operator with a radially symmetric potential. 
The virtues of this strategy lie on the fact that it avoids
completely the phenomenon of spectral pollution and it 
always provides two-side estimates for the eigenvalues 
with explicit error bounds on both eigenvalues and eigenfunctions.  
We also discuss convergence rates of the method as well as
illustrate our results with various numerical experiments. 
\end{abstract}

\maketitle


\section{Introduction}
\label{section:intro}

The free Dirac operator acting on 4-spinors of
$L^2(\R^3)^4$ is determined by the first order differential expression
\begin{equation*}
D:=\alpha\cdot P+\beta=
-\rmi\sum_{k=1}^3\alpha_k\partial_k + \beta,
\end{equation*}
where $\alpha=\left(\alpha_1,\alpha_2,\alpha_3\right)$, 
and the Pauli-Dirac matrices are:
\begin{gather*}
\alpha_i=\left(
\begin{array}{cc}
0&\sigma_i\\
\sigma_i&0
\end{array} \right)\quad\mbox{ and }\quad
\beta=\left(\begin{array}{cc}
I_{\C^2}&0\\
0&-I_{\C^2}
\end{array} \right),\\
\mbox{ for  } \sigma_1=\left(
\begin{array}{cc}
0&\; 1\\
1&\;0
\end{array} \right),\quad \quad\sigma_2=\left(
\begin{array}{cc}
0&-\rmi\\
\rmi&0
\end{array} \right)\quad\mbox{ and } \quad\sigma_3=\left(
\begin{array}{cc}
1&0\\
0&-1
\end{array} \right).
\end{gather*}
We assume that the units are fixed so that $m=c=\hbar=1$. 
Standard arguments involving the Fourier transform show that $D$ 
defines a self-adjoint operator with domain $H^1(\R^3)^4$ and that
the spectrum of $D$ is
\[
        \Spec(D)=(-\infty,-1]\cup[1,\infty).
\] 

Spherically symmetric potentials are Hermitean $4\times 4$ matrix 
multiplication operators, $V$, acting on $L^2(\R^2)^4$, such that ${\mathcal C}^\infty_0
(\R^3\setminus\{0\})\subset \dom(V)$ and 
\begin{equation*}
e^{\rmi \varphi n\cdot S}V(R^{-1}x)e^{-\rmi \varphi n\cdot S}=
V(x),\;\forall x\in \R^3,\; \forall \varphi\in [0,4\pi),
\end{equation*}  
where 
\begin{equation*}
S=\frac{1}{2}\begin{pmatrix}
0&\sigma\\
\sigma&0
\end{pmatrix}
\end{equation*}  
is the spin operator, and  $R$ is the matrix of the rotation of angle 
$\varphi$ and axis $n$. Here $\dom(V)$ denotes the maximal domain 
of $V$.

Spherically symmetric potentials may be constructed 
from  maps $\phi_{\mathrm{sc},\mathrm{el},\mathrm{am}}:\R \longrightarrow \R$ via
\begin{equation}
V(x)=\phi_{\mathrm{sc}}(|x|)\beta+\phi_{\mathrm{el}}(|x|)I_{\C^4}+i \phi_{\mathrm{am}}(|x|)
\beta \alpha\cdot \frac{x}{|x|}. \label{Eq:Example} 
\end{equation}  
The subscripts ``$\mathrm{sc}$'', ``$\mathrm{el}$'' and ``$\mathrm{am}$'', stand for ``scalar'', 
``electric'' and ``magnetic'' potential, respectively. Radial symmetry 
on the electric potential, for instance, is a consequence of the 
assumption that the atomic nucleus is pointwise and the electric 
forces are isotropic in an isotropic medium like the vacuum.
In the particular case $\phi_{\mathrm{sc}}=\phi_{\mathrm{am}}=0$ and $\phi_{\mathrm{el}}(r)=\gamma/r$, $|\gamma|<\sqrt{3}/2$,
$H$ describes the motion of a relativistic electron
in the field created by an atomic nucleus.

If $V$ is subject to suitable smallness and regularity conditions (such as  those ensuring that $V$ is relatively compact with respect to $D$), then $H:=D+V$ defines an essentially self-adjoint operator in $C^\infty_0(\R^3\setminus\{0\})^4$ with self-adjoint extension domain $H^1(\R^3)^4$ and 
\begin{equation} \label{eq:spess}
        \Spec_{\text{ess}}(H)=\Spec(D)=(-\infty,-1]\cup[1,\infty),
\end{equation}
see \cite[Theorems~4.2 and 4.16]{Thaller}. In fact there are known conditions, satisfied by potentials of practical interest (see e.g. \cite[Theorems~6 and A]{BerthierGeorgescu}), preventing the existence of embedded eigenvalues.

The addition of a non-zero potential might give rise to a non-empty discrete spectrum in the gap $(-1,1)$. These eigenvalues can only be found explicitly for a few simple systems which do not go much beyond the case of a single electron embedded in a field created by an atomic nucleus,
see \eqref{eq:eva}. For more complicated potentials, one has to rely either on asymptotic techniques (cf. \cite{Lap_Bir}, \cite{Gries}, \cite{Schmidt} and references therein) or on numerical estimations. 

Few robust computational procedures are currently available to estimate numerically the eigenvalues of $H$, see \cite{Dyall}, \cite{DES00}, \cite{DESV}, \linebreak \cite{DES03}, \cite{DEL}, \cite{LLT} and references therein. Since $H$ is strongly indefinite, a direct implementation of the projection method is not possible due to variational collapse. Under no further restriction on the potential or the reduction basis, accumulation points of eigenvalues of the finite dimensional approximate operator which do not belong to $\Spec(H)$ might appear, see \cite{Stanton} and \cite{Dyall}. This phenomenon is known as spectral pollution.

The aim of the present paper is to investigate the applicability of the so called quadratic projection method for finding eigenvalues and eigenfunctions of $H$. This method has been recently studied in an abstract setting (see \cite{Shargo}, \cite{Lev_Shar}, \cite{Bou1} and \cite{Bou2}) and it has already been applied with some success to crystalline Schr\"odinger operators, \cite{Bou_Lev}, and magnetohydrodynamics, \cite{Strauss}. In this approach, explained at length in Section~\ref{section:qpm}, the underlying discretised eigenvalue problem is quadratic in the spectral parameter (rather than linear), and has non-real eigenvalues.
Its main advantage over a standard projection method is its robustness:  it \textit{never} pollutes and it always provides 
\textit{a posteriori two-sided estimates} of the error of computed eigenvalues.

Section~\ref{subsection:sosp} is devoted to a self-contained description of the quadratic projection method. In Theorem~\ref{non_pollution} we present an alternative proof of Shargorodsky's non-pollution Theorem 
\cite[Theorem~2.6]{Lev_Shar}. Additionally we show that information about eigenfunctions can be recovered from the quadratic projection method, see \eqref{eq_approx_efu}. 

In Section~\ref{section:numerical} we test the practical applicability of the numerical scheme proposed in Section~\ref{section:qpm} by reporting on various numerical experiments.
As benchmark potentials we have chosen the purely coulombic, sub-coulombic and inverse harmonic electric potentials. 
In order to perform these numerical experiments, we split the space into upper and lower spinor component, after writing the problem in radial form. Spectral pollution in the standard projection method using this decomposition and the consequences of unbalancing the number of upper/lower components is discussed in Section~\ref{section:kine_bal}. 

We have chosen a basis of Hermite functions to reduce the continuous problem into finite-dimensional from. In Section~\ref{subsection:conv}
we perform a rigourous convergence analysis of the quadratic method 
using this basis. This analysis relies upon the general result \cite[Theorem~2.1]{Bou2}. The surprising numerical evidence included in Section~\ref{subsection:kin_umbalance} strongly suggest that,
under appropriate circumstances, choosing a basis that heavily pollute the standard projection method, significantly improves convergence rates of the quadratic projection method.

We have posted a fully functional Matlab code for assembling the matrices  involved in the quadratic projection method for $V$ a coulombic potential in the Manchester NLEVP collection of non-linear eigenvalue problems \cite{bhms08}. See the permanent link \cite{nlevp}. In Appendix~\ref{appendix:matr_entries} we include details of the explicit calculation of all matrix coefficients.

\section{Spectral pollution and upper/lower spinor component balance}
\label{section:kine_bal}

Consider, for any $\Psi \in L^2(\R^3)^4$, the spherical coordinates
representation:
\begin{equation}\label{eq:radialform}
\psi(r,\theta,\phi)=r\Psi(r\sin(\theta)\sin(\phi),r\sin(\theta)
\cos(\phi),r\cos(\theta))
\end{equation}
where $(r,\theta,\phi)\in(0,\infty)\times[0,\pi)\times[-\pi,\pi)$. 
The map $\Psi\mapsto \psi$ is an isomorphism between 
$L^2(\R^3)^4$ and $L^2((0,\infty),dr)\otimes L^2(S^2)^4$. If we  
decompose $L^2(S^2)^4$ as the sum of the so called 
two-dimensional angular momentum subspaces ${\mathfrak K}_{m_j,\kappa_j}$, 
the partial wave subspaces are given by
\begin{equation*}
{\mathfrak H}_{m_j,\kappa_j}=L^2((0,\infty),dr)\otimes 
{\mathfrak K}_{m_j,\kappa_j},
\end{equation*} 
so that $L^2(\R^3)^4=\bigoplus {\mathfrak H}_{m_j,\kappa_j}$.
Without further mention, here and below we always assume that 
the indices $(m_j,\kappa_j)$ 
run over the set $m_j\in\{-j,\cdots,j\}$ and 
$\kappa_j\in\{\pm (j+\frac{1}{2})\}$, for $j\in\{\frac{2k+1}{2}:k\in \N\}$. 
 
The $r$ factor in 
\eqref{eq:radialform} renders a  Dirichlet boundary condition at $0$.
The dense subspaces $C^\infty_0(0,\infty)\otimes 
{\mathfrak K}_{m_j,\kappa_j}\subset {\mathfrak H}_{m_j,\kappa_j}$ 
are invariant under the action of $H$.
If $V$ is as in \eqref{Eq:Example}, then
$H\upharpoonright C^\infty_0(0,\infty)\otimes 
{\mathfrak K}_{m_j,\kappa_j}$ is unitary equivalent to
\begin{equation}
\label{Eq:RadialDiracMatrix}
H_{m_j,\kappa_j}:=\begin{pmatrix}
1+\phi_{\mathrm{sc}}(r)+\phi_{\mathrm{el}}(r)&-\frac{d}{dr}+\frac{\kappa_j}{r}+\phi_{\mathrm{am}}(r)\\
\frac{d}{dr}+\frac{\kappa_j}{r}+\phi_{\mathrm{am}}(r)&-1-\phi_{\mathrm{sc}}(r)+\phi_{\mathrm{el}}(r)
\end{pmatrix}.
\end{equation}
The operators 
$H_{m_j,\kappa_j}$ are essentially self-adjoint in  
$C^\infty_0(0,\infty)^2$ under suitable conditions on the potentials $\phi_{\mathrm{sc},\mathrm{el},\mathrm{am}}$. Then
\[
      \Spec(H)=\overline{\bigcup 
\Spec(H_{m_j,\kappa_j})}. 
\]
Below we often
suppress the sub-index $(m_j,\kappa_j)$ from operators and spaces, and only
write the index $\kappa\equiv \kappa_j$. Note that the eigenvalues of $H$ are degenerate and their multiplicity is at least $m_j$. By virtue of
\eqref{eq:spess},
\[
      \Spec_{\text{disc}}(H)=\bigcup \Spec_{\text{disc}}(H_{\kappa}).
\]
Since
$\Spec_{\text{ess}}(H_{\kappa})=(-\infty,-1]\cup[1,\infty)$,
$H_{\kappa}$ are strongly indefinite. 

Let us consider a heuristic approach to the problem of spectral pollution for $H_\kappa$ and the decomposition of $L^2(0,\infty)^2$ into upper and lower spinor components. For simplicity we assume that the potential is purely electric and attractive, $\phi_{\mathrm{sc}}(r)=\phi_{\mathrm{am}}(r)=0$ and $\phi_{\mathrm{el}}(r)<0$. 

The pair $(u,v)\in \dom(H_\kappa)$ is a wave function of $H_\kappa$ with associated eigenvalue $E\in (-1,1)$ if and only if 
\begin{equation}    \label{eq:exact}
   (\phi_{\mathrm{el}}+1-E)u +(-\partial_r+\frac{\kappa}{r})v =0 \qquad \text{and} \qquad (\partial_r+\frac{\kappa}{r})u +(\phi_{\mathrm{el}}-1-E)v =0.
\end{equation}
System \eqref{eq:exact} can be decoupled into
\begin{equation}  \label{eq:deco}
\qquad L_Eu=0 \qquad \text{and} \qquad
v=-(\phi_{\mathrm{el}}- 1-E)^{-1}\left(\partial_r+\frac{\kappa}{r}\right)u,
\end{equation}
where
\begin{align*}
    L_E&:=-\left(-\partial_r+\frac{\kappa}{r}\right)(\phi_{\mathrm{el}}- 1-E)^{-1}\left(\partial_r+\frac{\kappa}{r}\right)+(\phi_{\mathrm{el}}+1-E) \\
    &\geq 
    (1+E)^{-1}\left(\partial_r+\frac{\kappa}{r}\right)^\ast
    \left(\partial_r+\frac{\kappa}{r}\right) +\phi_{\mathrm{el}} +(1-E).
\end{align*}
If we assume $\phi_{\mathrm{el}}$ is relatively compact with respect to the non-negative
operator $(\partial_r+\frac{\kappa}{r})^\ast(\partial_r+\frac{\kappa}{r})$, 
\begin{equation} \label{eq:ellip}
    \min \Spec_{\text{ess}} L_E\geq (1-E)>0.
\end{equation}
Moreover, the expression for $v$ in \eqref{eq:deco} 
yields $u\not\equiv 0$ and 
$v\not\equiv 0$. Hence $E\in \Spec_{\text{disc}}H_\kappa$ if and only if 
$0$ is in the discrete spectrum of  $L_E$.

Let $\CM_N\subset L^2(0,\infty)$ be a nested family of finite-dimensional subspaces such that $\CM_N\subset \CM_{N+1}$,
\[
    \overline{\bigcup_{N\geq 1} \CM_N}=L^2(0,\infty)
\]
and $\CL_{NM}:=\CM_N\oplus \CM_M\subset \dom(H_\kappa)$. Let $P_N$ denote the orthogonal projection onto $\CM_N$, so that $P_N\to I$ in the strong sense. With this notation we wish to apply the projection method to
operator $H_\kappa$ with test spaces $\CL_{NM}$.

We first consider the case $N=M$. Let $(u_N,v_N)^t\in \CL_{NN}$ be a sequence normalised by $\|u_N\|^2+\|v_N\|^2=1$, such that
\begin{align}
   P_N[(\phi_{\mathrm{el}}+1-E_N)u_N+(-\partial_r+\frac{\kappa}{r})v_N] =0, \label{eq:pro1}\\
   P_N[(\partial_r+\frac{\kappa}{r})u_N+(\phi_{\mathrm{el}}-1-E_N)v_N] =0
   \label{eq:pro2}
\end{align}
for a suitable sequence $E_N\to \tilde{E} \in (-1,1)$. Let $\chi_N$ be such that
\[
     v_N=-(\phi_{\mathrm{el}}- 1-E_N)^{-1}\left(\partial_r+\frac{\kappa}{r}\right)u_N
     +\chi_N.
\]
By virtue of \eqref{eq:pro2}, 
\begin{equation}     \label{eq:have}
     P_N(\phi_{\mathrm{el}}-1-E_N)\chi_N=0.
\end{equation}   
If we where able to prove that
\begin{equation}     \label{eq:want}
      \|P_N (-\partial_r+\frac{\kappa}{r})\chi_N\| \to 0,
      \qquad N\to \infty,
\end{equation}
by substituting into \eqref{eq:pro1}, we would have $\|P_N L_{\tilde{E}}u_N \|\to 0$. Thus, by virtue of the min-max principle alongside with \eqref{eq:ellip}, we would have $0\in \Spec_{\text{disc}} L_{\tilde{E}}$ and so $\tilde{E}\in \Spec_{\text{disc}} H_\kappa$.

Unfortunately, \eqref{eq:want} can not be easily verified. Since $P_N\to I$ in the strong sense, $(\phi_{\mathrm{el}}-1-E_N)\chi_N\to 0$ in the weak sense. Since $\phi_{\mathrm{el}}<0$, $\chi_N\to 0$ also in the weak sense. Therefore we are certain that $(-\partial_r+\frac{\kappa}{r})\chi_N \to 0$ and hence
$P_N L_{\tilde{E}}u_N \to 0$ weakly. This does not imply in general  \eqref{eq:want}, it only gives indication that the latter is a sensible guess.  

The key idea behind the above heuristic argument motivates several pollution-free numerical methods for computing eigenvalues of $H_\kappa$, including the quadratic projection method discussed in the forthcoming section: as \eqref{eq:exact} might be vulnerable to spectral pollution in the gap $(-1,1)$ because \eqref{eq:want} is not necessarily guaranteed, we characterise the eigenvalues of this problem by testing whether $0$ is in the spectrum of an auxiliary operator. This auxiliary operator has essential spectrum in $\{\Re(z)>0\}$, so that a neighbourhood of $0$ is always protected against spectral pollution (see the discussion preceding \cite[Lemma~5]{Bou1}.) In the quadratic projection method, \eqref{eq:want} is substituted by \eqref{eq_approx_efu}.

\begin{remark} \label{desmethod}
A successful procedure for computing eigenvalues of the Dirac operator is the one developed by Dolbeault, Esteban and S\'er\'e, \cite{DES00} and \cite{DES03}. This procedure systematically implements the above idea. Multiplying by $u$ the left side equation of \eqref{eq:deco} and integrating in the space variable, gives $A(E)u=0$ for
\[
     A(\lambda)v:=\int_0^\infty \frac{|(r^\kappa v)'|^2}{r^{2\kappa}(1+\lambda-\phi_{\mathrm{el}})} +(\phi_{\mathrm{el}}+1-\lambda)|v|^2 \ud r.
\] 
Both terms inside the integral decrease in $\lambda$ so, if $v$ is regular enough, there is a unique $\lambda=\lambda(v)\in \R$ satisfying $A(\lambda)v=0$. Upper estimates for the eigenvalues of $H_\kappa$ are then found from those of the matrix corresponding to the $\lambda$-dependant form $A(\lambda)v$ restricted to $v\in \CM_n$. 
\end{remark}

Suppose now that $M:\N\longrightarrow \N$. Let 
$(u_N,v_N)^t\in \CL_{NM(N)}$ be a sequence normalised by $\|u_N\|^2+\|v_N\|^2=1$, such that \eqref{eq:pro1} holds true and $P_N$ is replaced in \eqref{eq:pro2} by $P_{M(N)}$. If $\lim_{N\to \infty}M(N)/N<1$, we are certainly less likely to obtain \eqref{eq:want} as $P_N$ is replaced in \eqref{eq:have} by $P_{M(N)}$. If, on the other hand, $\lim_{N\to \infty}M(N)/N>1$, then we would be more confident about obtaining \eqref{eq:want}
for symmetric reasons. In Section~\ref{subsection:kin_umbalance} we will present a series of numerical experiments supporting this heuristic argumentation. In particular, spectral pollution in the standard projection method appears to increase as $\lim_{N\to \infty}M(N)/N$ decreases (see figures~\ref{fig:evo_smooth_gamma_neg}) .

\pagebreak

\section{The quadratic projection method}   \label{section:qpm}           

\subsection{The second order relative spectrum} \label{subsection:sosp}
We now describe the basics of 
the quadratic projection method. In order to simplify the notation,
below and elsewhere $ G$ denotes a generic
self-adjoint operator with domain $\dom( G)$ acting on a Hilbert space.
One should think of $ G$ as being any of the $H_{\kappa}$ introduced in the
previous section. The inner product in this Hilbert space is 
denoted by $\langle \cdot,\cdot \rangle$ and the norm by $\|\cdot\|$.

Let $\mathcal{L}\subset \dom( G)$ be a subspace of finite dimension.
Assume that $\mathcal{L}=\Span\{b_1,\ldots,b_n\}$, where the vectors $b_j$ are 
linearly independent.  Let
\begin{equation} \label{eq_coeff_qpm}
\begin{gathered}
    K:=(\langle  G b_j, Gb_k\rangle)_{j,k=1}^n, \qquad
    L:=(\langle  G b_j,b_k\rangle)_{j,k=1}^n \\ \qquad \text{and} \qquad
    B:=(\langle b_j,b_k\rangle)_{j,k=1}^n.
\end{gathered}
\end{equation}
For $z\in\C$, let $Q(z):=Bz^2-2zL+K\in \C^{n\times n}$.
The aim of the quadratic projection method is to compute 
the so called  \textit{second order spectrum of $ G$ relative 
to $\mathcal{L}$}: 
\[
  \Spec_2( G,\mathcal{L}):=\Spec(Q)=
 \{\lambda\in \C\,:\, Q(\lambda)\underline{v}=0,\, 
\text{some}\ 0\not=\underline{v}\in \C^n\}.
\]
Since $B$ is a non-singular matrix and $\det Q(z)$ is a polynomial of degree
$2n$, $\Spec_2( G,\mathcal{L})$ consists of at most $2n$ points. These
points do not lie on the real line, 
except if $\mathcal{L}$ contains eigenvectors of $ G$. However, since $Q(z)^\ast=Q(\overline{z})$, 
\[
    \overline{\Spec_2( G,\mathcal{L})}=\Spec_2( G,\mathcal{L}).
\]
Approximation of the discrete spectrum of $ G$ using the second order spectrum
has been discussed in \cite{Lev_Shar}, \cite{Bou2}, \cite{Bou_Lev} and
the references therein.  

The following result establishes a crucial
connection between $\Spec( G)$ and $\Spec_2( G,\mathcal{L})$. Without 
further mention, we often identify the elements $v\in\mathcal{L}$ 
with corresponding 
$\underline{v}\in\C^n$ in the obvious manner:
\[
    v=\sum_{k=1}^n \langle v,b_j^\ast \rangle b_j\qquad \text{and}
    \qquad \underline{v}=(\langle v,b_j^\ast\rangle)_{j=1}^n,
\]
where $\{b_j^\ast\}$ is the basis conjugate to $\{b_j\}$. 
Note that if the $b_j$ are mutually orthogonal and $\|b_j\|=1$, 
then $b_j^\ast=b_j$. Below and elsewhere $\Pi_\mathcal{S}$ denotes
the orthogonal projection onto a subspace $\mathcal{S}\subset \dom( G)$.

\begin{theorem} \label{non_pollution}
Let $\mathcal{L}\subset\dom( G)$ be any finite-dimensional subspace.
If $\lambda\in\Spec_2( G,\mathcal{L})$, then
\begin{equation} \label{eq_non_poll}
   [\Re(\lambda)-|\Im(\lambda)|,\Re(\lambda)+|\Im(\lambda)|]
   \cap \Spec( G)\not=\varnothing.
\end{equation}
Moreover, suppose that $E$ is an isolated eigenvalue of $ G$ with associated eigenspace $\mathcal{E}\subset \dom( G)$. Let
\[
   d_E:=\dist(E,\Spec  G \setminus\{E\})=
    \min\{|E-x|\,:\, x\in\Spec  G,\, x\not=E \}.
\]
If
\begin{equation} \label{eq:unique}
 [\Re(\lambda)-|\Im(\lambda)|,\Re(\lambda)+|\Im(\lambda)|]
   \cap \Spec( G)=\{E\}
\end{equation}
and $Q(\lambda)\underline{v}=0$ for 
$0\not=\underline{v}\in \C^n$, then the corresponding $v\in \mathcal{L}$  satisfies
\begin{equation} \label{eq_approx_efu}
    \frac{\|v-\Pi_\mathcal{E} v\|}{\|v\|}\leq \frac{ |\Im \lambda|}{d_E}.
\end{equation}
\end{theorem}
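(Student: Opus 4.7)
The plan is to reduce the matrix condition $Q(\lambda)\underline{v}=0$ to a single operator-norm identity and then apply the spectral theorem for $G$. First I translate $Q(\lambda)\underline{v}=0$ into the orthogonality $(G-\bar\lambda)^2 v\perp\mathcal{L}$, which follows from the definitions \eqref{eq_coeff_qpm} of $B$, $L$, $K$ by expanding $\langle (G-\bar\lambda)^2 v,b_k\rangle$ and matching coefficients; this step, with its attendant complex-conjugation bookkeeping against the Hermitian Gram matrices $B$, $L$, $K$, is where I expect the main technical care is required. Pairing the orthogonality against $v\in\mathcal{L}$ itself gives $\langle (G-\bar\lambda)^2 v,v\rangle=0$, and separating real and imaginary parts---using that $\|v\|^2$, $\langle Gv,v\rangle$ and $\|Gv\|^2$ are real by self-adjointness of $G$---produces, for $\Im\lambda\neq 0$, the two moment identities
\[
\langle Gv,v\rangle=\Re\lambda\cdot\|v\|^2, \qquad \|Gv\|^2=|\lambda|^2\|v\|^2.
\]
Substituting these into the expansion of $\|(G-c)v\|^2$ for any real $c$ yields the single key identity
\[
\|(G-\Re\lambda)v\|^2=(\Im\lambda)^2\|v\|^2.
\]

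For the non-pollution assertion, I rewrite this identity via the spectral theorem of $G$ as $\int(\mu-\Re\lambda)^2\ud\|E_\mu v\|^2=(\Im\lambda)^2\|v\|^2$. If the closed interval $[\Re\lambda-|\Im\lambda|,\Re\lambda+|\Im\lambda|]$ were disjoint from $\Spec(G)$, its compactness and the closedness of $\Spec(G)$ would furnish $\varepsilon>0$ with $(\mu-\Re\lambda)^2\geq(|\Im\lambda|+\varepsilon)^2$ uniformly on $\Spec(G)$; integrating against the spectral measure of $v$ (which has total mass $\|v\|^2$) would contradict the identity. This establishes \eqref{eq_non_poll}.

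For the eigenfunction bound, I decompose $v=\Pi_\mathcal{E}v+w$; since $\Pi_\mathcal{E}$ commutes with $G$ and $(G-E)\Pi_\mathcal{E}v=0$, one has $\|(G-E)v\|^2=\|(G-E)w\|^2$. A direct expansion $\|(G-E)v\|^2=\|Gv\|^2-2E\langle Gv,v\rangle+E^2\|v\|^2$, followed by substitution of the two moment identities, collapses everything to the clean formula $\|(G-E)w\|^2=|\lambda-E|^2\|v\|^2$. Because $w\in\mathcal{E}^\perp$ and the spectrum of $G|_{\mathcal{E}^\perp}$ lies at distance at least $d_E$ from $E$, the spectral theorem gives $\|(G-E)w\|^2\geq d_E^2\|w\|^2$, and chaining the two estimates yields
\[
\frac{\|v-\Pi_\mathcal{E}v\|}{\|v\|}\leq\frac{|\lambda-E|}{d_E}.
\]
Hypothesis \eqref{eq:unique} gives $(\Re\lambda-E)^2\leq(\Im\lambda)^2$, so $|\lambda-E|^2=(\Re\lambda-E)^2+(\Im\lambda)^2\leq 2(\Im\lambda)^2$, and the above is at most $\sqrt{2}\,|\Im\lambda|/d_E$, matching \eqref{eq_approx_efu} up to a factor of $\sqrt{2}$; recovering the stated constant exactly would require exploiting more of the full orthogonality $(G-\bar\lambda)^2 v\perp\mathcal{L}$ than the pairing with $v$ alone, since the formula $\|(G-E)w\|^2=|\lambda-E|^2\|v\|^2$ is itself an equality.
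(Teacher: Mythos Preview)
Your argument is essentially identical to the paper's: both reduce $Q(\lambda)\underline{v}=0$ to the weak form $\langle(\lambda-G)v,(\overline{\lambda}-G)w\rangle=0$ for $w\in\mathcal{L}$, test with $w=v$, split into real and imaginary parts to obtain the two identities $\|(G-\Re\lambda)v\|=|\Im\lambda|\,\|v\|$ and $\langle(\Re\lambda-G)v,v\rangle=0$, and then derive the exact formula $\|(G-E)v\|=|\lambda-E|\,\|v\|$. One small technical point: your formulation ``$(G-\bar\lambda)^2v\perp\mathcal{L}$'' presupposes $v\in\dom(G^2)$, which is not assumed; the paper's sesquilinear form $\langle(\lambda-G)v,(\overline{\lambda}-G)w\rangle$ avoids this and is what you are actually using when you pair with $v$.

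Your caution about the final constant is well placed, and in fact the paper's own proof has precisely the gap you identify: it obtains $\|(I-\Pi_\mathcal{E})v\|\leq\frac{|E-\lambda|}{d_E}\|v\|$ and then asserts $\frac{|\Im\lambda|}{d_E}$ without justification. Your speculation that the exact constant might be recovered from the \emph{full} orthogonality cannot succeed, however. Take $G=\mathrm{diag}(0,d)$ on $\C^2$ and $\mathcal{L}=\mathrm{span}\{e_1+\epsilon e_2\}$ with $0<\epsilon<1$; here $\mathcal{L}$ is one-dimensional, so the pairing with $v$ already \emph{is} the full orthogonality. One computes $\lambda=\frac{\epsilon^2 d+i\epsilon d}{1+\epsilon^2}$, the interval $[\Re\lambda-|\Im\lambda|,\Re\lambda+|\Im\lambda|]$ meets $\Spec(G)$ only at $E=0$, $d_E=d$, and
\[
\frac{\|v-\Pi_\mathcal{E}v\|}{\|v\|}=\frac{\epsilon}{\sqrt{1+\epsilon^2}}
\ >\ \frac{\epsilon}{1+\epsilon^2}=\frac{|\Im\lambda|}{d_E},
\]
while the bound $\frac{|\lambda-E|}{d_E}=\frac{\epsilon}{\sqrt{1+\epsilon^2}}$ that you (and the paper's computation) obtain is attained with equality. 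So the correct sharp inequality is $\frac{\|v-\Pi_\mathcal{E}v\|}{\|v\|}\leq\frac{|\lambda-E|}{d_E}$, and the constant in \eqref{eq_approx_efu} as stated should carry the factor $\sqrt{2}$ you found.
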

\begin{proof} 
Let $\lambda\in\Spec_2( G,\mathcal{L})$ and assume that $\Im(\lambda)\not=0$. Since
\[
    Q(z)= (\langle (z- G)b_j,(\overline{z}- G)b_k \rangle)_{j,k=1}^n,
\]
$Q(\lambda)\underline{v}=0$ for non-trivial 
$\underline{v}\in \C^n$ if and only if
\begin{equation} \label{eq_weak_form}
    \langle (\lambda- G)v,(\overline{\lambda}- G)w \rangle=0 \qquad
    \forall w\in \mathcal{L}.
\end{equation}
For $u,w\in \mathcal{L}$ and $z=\mu+i\nu$ where $\mu,\nu\in\R$, we have
\[
    \langle (z- G)u,(\overline{z}- G)w\rangle=
   \langle (\mu- G)u,(\mu- G)w\rangle-\nu^2 \langle u,w \rangle
    -2i\nu \langle (\mu- G)u,w \rangle.
\]
In particular, if we take $w=v$ in $\eqref{eq_weak_form}$, we achieve
\[
    \|(\Re \lambda- G)v\|^2-|\Im \lambda|^2\|v\|^2-2i|\Im \lambda|
    \langle (\Re \lambda- G)v,v\rangle=0.
\]
Thus 
\begin{equation} \label{eq_minimiser}
      \frac{\|(\Re \lambda- G)v\|}{\|v\|}=|\Im \lambda |
\end{equation}
and
\begin{equation} \label{eq_minimiser2}
 |\Im \lambda|\langle (\Re \lambda- G)v,v\rangle=0.
\end{equation}
But recall that $ G= G^\ast$, so
\[
    \dist(x,\Spec  G)=\min_{u\in \dom( G)}\frac{\|(x- G)u\|}{\|u\|}
\]
for $x\in\R$. Therefore
\[
    \dist(\Re \lambda ,\Spec  G)\leq |\Im \lambda |,
\]
confirming \eqref{eq_non_poll}.

For the second part, assume that $\lambda$, $E$ and $\mathcal{E}$ 
are as in the hypothesis, and let $v$ satisfy \eqref{eq_weak_form} and hence 
\eqref{eq_minimiser}. Then
\begin{align*}
  \|(E- G)v\| &\leq |E-\Re \lambda |\|v\|+\|(\Re \lambda- G)v\| \\
    &\leq 2|\Im \lambda | \|v\|.
\end{align*}
Since
\[
    \dist (x,\Spec  G\setminus \{E\})=\min_{u\in \dom(H),\, u\perp \mathcal{E}}
   \frac{\|(x- G)u\|}{\|u\|},
\]
we have
\[
   \|(E- G)(I-\Pi_\mathcal{E})v\|\geq d_E\|(I-\Pi_\mathcal{E})v\|,
\]
so that
\[
   \|(I-\Pi_{\mathcal{E}})v\| \leq \frac{\|(E- G)v \|}{d_E}.
   \]

Now,
\begin{equation*}
  \|(E- G)v\|^2 =|E-\Re \lambda |^2\|v\|^2
+\|(\Re \lambda- G)v\|^2 
+ 2(E-\Re \lambda)\langle (\Re \lambda- G)v,v\rangle.
\end{equation*}
Thus, by \eqref{eq_minimiser} and \eqref{eq_minimiser2},
\begin{equation*}
  \|(E- G)v\|^2 =|E-\Re \lambda |^2\|v\|^2
+|\Im \lambda |^2\|v\|^2= |E-\lambda |^2\|v\|^2.
\end{equation*}
This gives 
\[
   \|(I-\Pi_{\mathcal{E}})v\| \leq 
   \frac{|\Im \lambda|}{d_E}\|v\|
\]
as needed.
\end{proof}

This theorem suggest a method for estimating $\Spec(G)$ from \linebreak $\Spec_2( G,\mathcal{L})$. We call this method the 
 \textit{quadratic projection method}. Choose a suitable 
$\mathcal{L}\subset \dom( G)$, find $Q(z)$ and compute 
$\Spec_2( G,\mathcal{L})$. Those $\lambda\in\Spec_2( G,\mathcal{L})$ which are
close to $\R$ will necessarily be close to $\Spec( G)$, with a two-sided
error given by $|\Im(\lambda)|$.  Moreover if $\lambda$ is close enough
to an isolated eigenvalue $E$ of $G$, then 
\begin{equation} \label{eq:iso}    
   |\Re \lambda - E|\leq|\Im \lambda|
\end{equation}
and a vector
$0\not=v\in\mathcal{L}$ such that
$Q(\lambda)\underline{v}=0$ approaches the eigenspace associated to
this eigenvalue with an error also determined by $|\Im(\lambda)|$. 
Note that there is no concern with the position of $E$ relative to 
the essential spectrum, or any semi-definitness condition imposed
on $ G$. The procedure is always free from spectral pollution.

\begin{remark} \label{remark:better}
A stronger statement implying the first part of Theorem~\ref{non_pollution} can be found in \cite[Lemma~5.2]{Lev_Shar}. 
In fact, for isolated points of the spectrum, the residual on the right of \eqref{eq:iso} can actually be improved to $\frac{2|\Im(\lambda)|^2}{d_E}$ 
for $|\Im(\lambda)|$ sufficiently small, see 
\cite[Corollary~2.6]{Bou_Lev}. However, note that this later estimate is less robust in the sense that $d_E$ is not known a priori.
\end{remark}

\subsection{The Hermite basis} \label{subsection:hermite_basis}
In the forthcoming sections we apply the quadratic 
projection method to $ G=H_{\kappa}$. To this end we construct
finite-dimensional subspaces $\mathcal{L}\subset \dom(H_\kappa)$ 
generated by Hermite functions. 

Let the odd-order Hermite functions be defined by
\[
  \Phi_k(r):= c_{2k+1}^{-1}h_{2k+1}(r)
e^{-\frac{r^2}{2}}, \qquad r\geq 0,
\]
where $h_{n}(r)$ are the Hermite polynomials and
$c_n=\sqrt{2^{n-1}n!\sqrt{\pi}}$ are normalisation constants.
Motivated by the results of \cite[Section~3.4]{Bou2}
on Schr\"odinger operators with band gap essential spectrum,
we choose
\begin{equation} \label{eq_testspaces}
   \mathcal{L}\equiv 
\mathcal{L}_{NM}:=\Span\left\{\begin{pmatrix} \Phi_1(r) \\ 0\end{pmatrix},
\ldots,\begin{pmatrix} \Phi_N(r)\\ 0\end{pmatrix},
\begin{pmatrix} 0\\ \Phi_1(r) \end{pmatrix}, \ldots
, \begin{pmatrix} 0\\ \Phi_M(r) \end{pmatrix}
 \right\}.
\end{equation}
Below we might consider an unbalance between the number of basis elements
in the first and second component, $N\not=M$. Without further mention,
we often write $\mathcal{L}_N\equiv \mathcal{L}_{NN}$, and 
$\mathcal{L}_n\equiv \mathcal{L}_{N(n),M(n)}$ when $M$ and $N$ depend upon $n$.

We now compute the matrix coefficients of $Q(z)$. For this
we recall some properties of $\Phi_k(r)$.
The Hermite polynomials are defined by the identity
\begin{equation*}
h_n(z) := (-1)^ne^{z^2}\frac{d^n}{dz^n}e^{-z^2}, \qquad n\in \N.
\end{equation*}
They satisfy the recursive formulae,
\begin{align}
		h'_n(z) &= 2nh_{n-1}(z), \label{eq:der}\\
		h_{n+1}(z) &= 2zh_n(z)-2nh_{n-1}(z) \label{eq:recu}
\end{align}
and form an orthogonal set on the interval
$(-\infty,\,\infty)$ with weight factor $e^{-z^2}$,
\[
   \int_{-\infty}^\infty h_m(z)h_n(z)e^{-z^2}\,dz = 2^n n!
   \sqrt{\pi}\delta_{nm}.
\]
The generating function of this family of polynomials is
$$\sum_{n=0}^\infty h_n(z)\frac{t^n}{n!}=e^{2zt-t^2}.$$
Thus
\begin{equation} \label{eq:her0}
\sum_{n=0}^\infty h_n(0)\frac{t^n}{n!}=\sum_{n=0}^\infty
\frac{(-1)^nt^{2n}}{ n!}, \qquad \text{so that} \qquad
h_{n}(0)=\left\{ \begin{array}{ll} 0 & n-\text{odd}, \\
\frac{(-1)^{n/2}n!}{ (n/2)!} & n-\text{even}. \end{array} \right.
\end{equation}

The odd-order Hermite functions  
are the normalised wave functions of a harmonic oscillator,
\begin{equation} \label{eq:eva_ho} 
   -\Phi_{k}''(r)+r^2\Phi_k(r)=(2k+1)\Phi_k(r),
\end{equation}
subject to Dirichlet boundary condition at the origin.
They form an orthonormal basis of $L^2(0,\infty)$
and so $B=I$ in \eqref{eq_coeff_qpm}. 

The entries of the 
matrices $K$ and $L$ in \eqref{eq_coeff_qpm}
can also be found explicitly
from known properties of the Hermite polynomials. The crucial terms  
for assembling these matrices are obtained in Appendix~\ref{appendix:matr_entries}.

\subsection{Convergence} \label{subsection:conv}
The procedure described above is useful,
provided we can find points of $\Spec_2(H_{\kappa},\mathcal{L})$ 
near the real axis. Here we formulate sufficient conditions on the sequence
of subspaces $\mathcal{L}_{n}$, in order to guarantee
the existence of a sequence $\lambda_n\in \Spec_2(H_{\kappa},\mathcal{L}_n)$ 
accumulating at points of the discrete spectrum of $H_{\kappa}$. We 
then show that the sequence of subspaces \eqref{eq_testspaces} 
satisfies these conditions.

We firstly recall the following result
\cite[Theorem~2.1]{Bou2}. 
Note that $\mathcal{L}_{NM}$ are subspaces of $\dom(H_{\kappa}^2)$ for all $\kappa$.

\begin{theorem} \label{approx}
Let $E$ be an isolated eigenvalue of finite multiplicity
of $ G$ with associated eigenspace denoted by $\mathcal{E}$.
Suppose that $\mathcal{L}_n\subset \dom ( G^2)$ is a sequence
of subspaces such that
\begin{equation} \label{eq_cond_approx}
   \| G^p(u-\Pi_n u)\|\leq \delta(n)\|u\| \hspace{1cm} 
\forall u\in\mathcal{E},\, p=0,1,2,
\end{equation}
where $\delta(n)\to 0$ as $n\to \infty$ is independent of $u$ and $p$.
Then there exists $b>0$ and $\lambda_n \in \Spec_2( G,\mathcal{L}_n)$, such that
\begin{equation} \label{convergence_rate}
      |\lambda_n- E|< b \delta(n)^{1/2}. 
\end{equation}
\end{theorem}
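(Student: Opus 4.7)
The plan is to reformulate the pencil eigenproblem as an operator equation on $\mathcal{L}_n$ and then exploit its quadratic $\lambda$-dependence. Following the weak-form identity \eqref{eq_weak_form} from the proof of Theorem~\ref{non_pollution}, $\lambda\in\Spec_2( G,\mathcal{L}_n)$ is equivalent to the operator
\[
    S_n(\lambda):=\Pi_n(\lambda- G)^2\Pi_n\upharpoonright \mathcal{L}_n, \qquad \lambda\in\C,
\]
having a non-trivial kernel, where $\Pi_n$ is the orthogonal projection onto $\mathcal{L}_n$. A direct Taylor expansion yields
\[
   S_n(\lambda)=S_n(E)+(\lambda-E)\,S_n'(E)+(\lambda-E)^2\,I_{\mathcal{L}_n},
\]
with $S_n'(E)=2\Pi_n(E- G)\Pi_n\upharpoonright \mathcal{L}_n$ and $S_n''\equiv 2I_{\mathcal{L}_n}$, so that $\Spec_2( G,\mathcal{L}_n)$ coincides with the zero set of the polynomial $\lambda\mapsto\det S_n(\lambda)$.

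Second, I build an approximate null vector at $\lambda=E$. Fix a unit eigenvector $u\in\mathcal{E}$ with $ Gu=Eu$, and set $u_n:=\Pi_n u\in\mathcal{L}_n$. Since $( G-E)u=0$, the telescoping identity $( G-E)^k u_n=( G-E)^k(u_n-u)$ combined with the hypothesis \eqref{eq_cond_approx} for $p=0,1,2$ yields constants $c_k=c_k(E)$ independent of $n$ with $\|( G-E)^k u_n\|\leq c_k\delta(n)$ for $k=0,1,2$. Applying $\Pi_n$ only shortens norms, so
\[
\|S_n(E)\,u_n\|\leq c_2\,\delta(n), \qquad \|S_n'(E)\,u_n\|\leq 2c_1\,\delta(n),
\]
whereas the identity $\|u_n\|^2=\|u\|^2-\|u-u_n\|^2$ gives $\|u_n\|\to 1$ as $n\to\infty$.

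Third, I localise the analysis by evaluating the expansion on $u_n$:
\[
   S_n(\lambda)u_n=(\lambda-E)^2\,u_n+(\lambda-E)\,S_n'(E)u_n+S_n(E)u_n.
\]
On the circle $|\lambda-E|=\rho\,\delta(n)^{1/2}$ with $\rho$ large, the leading term has norm $\rho^2\delta(n)\|u_n\|$, which dominates the combined norm $(\rho c_1+c_2)\delta(n)$ of the other two terms, uniformly in $n$ once $\rho$ is chosen large enough. A Rouché-type argument applied to the analytic function $\lambda\mapsto\det S_n(\lambda)$ then produces a zero $\lambda_n$ of $\det S_n$ inside this disk, delivering \eqref{convergence_rate} with $b=\rho$.

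The main obstacle is making the last step rigorous: norm smallness of $S_n(\lambda)u_n$ along the single direction $u_n$ does not by itself imply the vanishing of the scalar polynomial $\det S_n(\lambda)$. The standard remedy is a Schur-complement reduction along $\mathrm{span}(\hat u_n)$, $\hat u_n:=u_n/\|u_n\|$: the Schur formula gives $\det S_n(\lambda)=\det D_n(\lambda)\cdot s_n(\lambda)$, where $D_n(\lambda)$ is the compression of $S_n(\lambda)$ to $\hat u_n^\perp\cap\mathcal{L}_n$ and $s_n(\lambda)$ is a scalar quadratic in $\lambda$ whose coefficients inherit the $O(\delta(n))$ bounds of step two, so that the balancing of step three applies directly to $s_n$. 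The reduction requires a uniform lower bound on $\|D_n(\lambda)^{-1}\|$ inside the target disk; this is where the isolation $d_E>0$ of $E$ in $\Spec G$ enters, since after factoring out the approximating direction of $\mathcal{E}$ the essential spectrum of $(E- G)^2$ on $\mathcal{E}^\perp$ is bounded below by $d_E^2$. When the multiplicity of $E$ exceeds one, $\hat u_n$ must be replaced by an orthonormal basis of $\Pi_n\mathcal{E}$ and $s_n$ by a matrix-valued analogue, but the determinant argument and the $\delta(n)^{1/2}$ balancing are essentially unchanged.
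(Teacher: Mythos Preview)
The paper does not prove this theorem; it merely recalls it from \cite[Theorem~2.1]{Bou2}. So there is no in-paper argument to compare against, and your sketch stands on its own.

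Your overall strategy is sound and would yield a proof, but two points deserve correction. First, the Schur complement $s_n(\lambda)=a(\lambda)-c(\lambda)D_n(\lambda)^{-1}b(\lambda)$ is \emph{not} a quadratic polynomial in $\lambda$: the factor $D_n(\lambda)^{-1}$ makes it merely analytic on the region where $D_n$ is invertible. This does not harm the Rouch\'e argument, but you should not claim quadraticity. Second, what you need is a uniform \emph{upper} bound on $\|D_n(\lambda)^{-1}\|$ (equivalently a lower bound on $D_n(\lambda)$), not a lower bound on that norm. This bound does hold, and the reason is cleaner than you indicate: for any $v\in\hat u_n^{\perp}\cap\mathcal L_n$ one has $\langle v,u\rangle=\langle \Pi_n v,u\rangle=\langle v,\Pi_n u\rangle=0$, so $v\perp\mathcal E$ automatically; hence $\operatorname{Re}\langle S_n(\lambda)v,v\rangle=\|(\operatorname{Re}\lambda-G)v\|^2-(\operatorname{Im}\lambda)^2\|v\|^2\geq \big((d_E-|\operatorname{Re}\lambda-E|)^2-|\operatorname{Im}\lambda|^2\big)\|v\|^2$, which is bounded below by $\tfrac12 d_E^2\|v\|^2$ on the disk $|\lambda-E|\leq\rho\,\delta(n)^{1/2}$ once $n$ is large. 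With this in hand, note that $P_\perp\hat u_n=0$ kills the leading $(\lambda-E)^2$ term in the off-diagonal blocks, so $\|b(\lambda)\|,\|c(\lambda)\|=O(\delta(n))$ and the correction $c\,D_n^{-1}b$ is $O(\delta(n)^2)$; comparing $s_n(\lambda)$ with $(\lambda-E)^2$ on the circle $|\lambda-E|=\rho\,\delta(n)^{1/2}$ then gives Rouch\'e for $\rho$ large enough, exactly as you outline. The higher-multiplicity case goes through by the same orthogonality trick applied to $(\Pi_n\mathcal E)^\perp\cap\mathcal L_n$.
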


We now verify \eqref{eq_cond_approx} for $ G=H_{\kappa}$
and $\mathcal{L}_n$ as in \eqref{eq_testspaces}. To this end we consider 
an argument similar to the one discussed in \cite[Section~3.4]{Bou2} 
for the case of the cristaline non-relativistic Schr\"odinger operator.

Let 
\[
    A= \begin{pmatrix} \tilde{A} & 0 \\ 0 &   \tilde{A} 
   \end{pmatrix}
\]
where $\tilde{A}=-\partial_r^2+r^2$ acting on $L^2(0,\infty)$, 
subject to Dirichlet boundary conditions at the origin. We fix the domain of $A$ as
\[
  \dom(A)=\bigcup_{N\in\N} \mathcal{L}_N,
\]
so that $A=A^\ast$, $A$ has a compact resolvent  
and $\mathcal{L}_{N}\ominus \mathcal{L}_{N-1}$ are the eigenspaces
of $A$. 

For $f$ and $g$ regular enough, we have
\[
   H_{\kappa}\begin{pmatrix}f \\ g\end{pmatrix}=
  \begin{pmatrix}  p_1 f+ p_2 g \\  p_3 f+ p_4 g \end{pmatrix}
\qquad \text{and}\qquad
 H_{\kappa}^2\begin{pmatrix}f \\ g\end{pmatrix}=
  \begin{pmatrix}  q_1 f+ q_2 g \\  q_3 f+ q_4 g \end{pmatrix},
\]
where $p_j$ are linear polynomials and $q_j$ are quadratic polynomials
in the variables $\partial_r,\,\phi_{\mathrm{sc},\mathrm{el},\mathrm{am}}$ and $\kappa/r$.
Condition \eqref{eq_cond_approx} 
is achieved by showing that both $H_{\kappa}$ and 
$H_{\kappa}^2$ are relatively
bounded in the sense of operators with respect to $A$.

The following results can be easily extended to more general 
potentials. Here we only consider those of interest in our present discussion.

\begin{lemma} \label{rel_bound}
Suppose that   
\begin{equation} \label{eq:condpot}
       \phi_{\mathrm{sc},\mathrm{el},\mathrm{am}}=\chi_{\mathrm{sc},\mathrm{el},\mathrm{am}}+\psi_{\mathrm{sc},\mathrm{el},\mathrm{am}}
       \quad \text{where}\quad \left\{\begin{array}{l}|\chi_{\mathrm{sc},\mathrm{el},\mathrm{am}}(r)|
\leq c r^{-1},\, \forall r>0, \\ \psi_{\mathrm{sc},\mathrm{el},\mathrm{am}}\in L^\infty(0,\infty), \end{array}\right.
\end{equation}
for some constant $c>0$. Then 
$\dom(A)\subseteq \dom(H_{\kappa}^2)$ and there 
exist constants $a,b>0$ such that
\[
    \|H_{\kappa}^p v\|\leq a\|Av\|+b\|v\| \qquad \forall\,
    v\in \dom(A),\, p=0,1,2.
\]
\end{lemma}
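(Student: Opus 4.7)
The plan is to reduce the inequality $\|H_\kappa^p v\|\leq a\|Av\|+b\|v\|$ to three ingredients: standard regularity estimates for the Dirichlet harmonic oscillator $\tilde A=-\partial_r^2+r^2$, Hardy's inequality, and the potential splitting \eqref{eq:condpot}. The case $p=0$ is immediate with $a=0$, $b=1$; the cases $p=1,2$ will follow by writing out each entry of $H_\kappa^p v$ as a sum of pieces that match the assembled estimates.

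For the harmonic oscillator estimates, observe that every $v\in\dom(A)$ is a finite linear combination of odd Hermite functions, hence vanishes at the origin and has Gaussian decay at infinity, so every integration by parts on $(0,\infty)$ picks up no boundary contribution. A direct computation yields
\[
\langle Av,v\rangle=\|\partial_r v\|^2+\|rv\|^2,
\]
while the identity $\Re\langle\partial_r v,rv\rangle=-\tfrac{1}{2}\|v\|^2$, obtained by integrating $\partial_r(r|v|^2)$, lets one expand
\[
\|Av\|^2=\|\partial_r^2 v\|^2+\|r^2 v\|^2+2\|r\partial_r v\|^2-2\|v\|^2.
\]
This gives control $\|\partial_r v\|,\|rv\|,\|\partial_r^2 v\|,\|r^2 v\|,\|r\partial_r v\|\leq C(\|Av\|+\|v\|)$. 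Hardy's inequality $\|w/r\|\leq 2\|\partial_r w\|$, applicable to each scalar component of $v$ since such components vanish at $0$, then yields $\|v/r\|\leq C(\|Av\|+\|v\|)$; together with $\phi=\chi+\psi$, $|\chi|\leq c/r$, $\psi\in L^\infty$, this gives $\|\phi w\|\leq c\|w/r\|+\|\psi\|_\infty\|w\|$ for every such scalar component $w$.

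For $p=1$, writing out $H_\kappa v$ componentwise from \eqref{Eq:RadialDiracMatrix} shows that each component is a sum of terms of the form $v_i$, $\phi v_i$, $\partial_r v_i$ and $\kappa v_i/r$; all four types are controlled by the estimates above, so $\|H_\kappa v\|\leq a\|Av\|+b\|v\|$ follows directly.

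For $p=2$, I would split $H_\kappa^2=D_\kappa^2+\{D_\kappa,V_\kappa\}+V_\kappa^2$, where $D_\kappa$ and $V_\kappa$ denote the free-Dirac and potential parts of $H_\kappa$. An elementary matrix computation gives that $D_\kappa^2$ is diagonal with entries $1-\partial_r^2+\kappa(\kappa\pm 1)/r^2$; the anticommutator $\{D_\kappa,V_\kappa\}$ produces terms of type $\phi\partial_r$, $\phi/r$ and (via the product rule) $\phi'$; and $V_\kappa^2$ produces pointwise products $\phi_i\phi_j$ dominated by $2c^2/r^2+$ a bounded term. Every contribution without a bare $v/r^2$ factor is handled by the toolkit of the second paragraph plus one further use of Hardy, namely $\|(\partial_r v)/r\|\leq 2\|\partial_r^2 v\|$. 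The main obstacle is controlling the bare $v_i/r^2$ contributions that come both from the centrifugal part $\kappa(\kappa\pm 1)/r^2$ of $D_\kappa^2$ and from $\chi_i\chi_j\lesssim 1/r^2$ in $V_\kappa^2$: estimating $\|v/r^2\|$ on $\dom(A)$ by $\|Av\|+\|v\|$ requires an iterated Hardy-type argument exploiting the precise near-origin behaviour of odd Hermite functions together with the cancellations built into the $2\times 2$ matrix structure of $H_\kappa^2$. Once that step is in place, collecting constants delivers $\|H_\kappa^2 v\|\leq a\|Av\|+b\|v\|$ with (possibly larger) constants, and the inclusion $\dom(A)\subseteq\dom(H_\kappa^2)$ is a by-product.
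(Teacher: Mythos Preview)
Your approach is essentially the paper's: reduce to relative boundedness with respect to $-\partial_r^2 I_{\C^2}$ and invoke Hardy's inequality. The paper's own proof is two lines to that effect, so you are expanding on, not deviating from, the intended argument.

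The step you flag as the ``main obstacle'' is a genuine gap, and your proposed fix via ``cancellations built into the $2\times 2$ matrix structure'' does not go through. The free part computes exactly as
\[
D_\kappa^2=\begin{pmatrix} 1-\partial_r^2+\frac{\kappa(\kappa+1)}{r^2} & 0\\[1mm] 0 & 1-\partial_r^2+\frac{\kappa(\kappa-1)}{r^2}\end{pmatrix},
\]
so for every nonzero integer $\kappa$ at least one diagonal entry carries a genuine $r^{-2}$ coefficient; there is no matrix cancellation to exploit. Nor can an iterated Hardy inequality bound $\|w/r^2\|$ for functions with $w(0)=0$ but $w'(0)\neq 0$: each odd Hermite function satisfies $\Phi_k(r)=c_k r+O(r^3)$ near the origin with $c_k\neq 0$, whence $\Phi_k/r^2\sim c_k/r\notin L^2(0,\infty)$. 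Concretely, for $\kappa=-1$ and $v=(0,\Phi_k)^t\in\dom(A)$ one has $(D_\kappa^2 v)_2=\Phi_k-\Phi_k''+2\Phi_k/r^2\notin L^2$, so in fact $\dom(A)\not\subseteq\dom(H_\kappa^2)$ already in the free case, which satisfies \eqref{eq:condpot} trivially. The paper's terse appeal to Hardy glosses over this same difficulty; your instinct to isolate the $r^{-2}$ terms was correct, but the statement as written cannot be rescued by the mechanism you describe.
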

\begin{proof}
It is enough to check that the $H_{\kappa}^p$ are
relatively bounded with respect to $ -\partial_r^2 I_{\C^2}$. 
This, on the other hand, is a straightforward consequence
of Hardy's inequality. 
\end{proof}

By combining this lemma with Theorem~\ref{approx} and
Theorem~\ref{non_pollution}, we achieve the following.

\begin{corollary} \label{corollary:conv}
Let $E$ be an isolated eigenvalue of $H_\kappa$ of finite 
multiplicity with associated eigenspace $\mathcal{E}$.
Let $\mathcal{L}_{NM}$ be defined by \eqref{eq_testspaces}. Suppose
that $\phi_{\mathrm{sc},\mathrm{el},\mathrm{am}}$ satisfy \eqref{eq:condpot} and assume 
additionally that  $\mathcal{E}\subset \dom(A^q)$ for some $q>1$. 
For $b>0$ large enough and independent of $N$ or $M$, we can always find 
a sequence $\lambda_{NM}\in \Spec_2(H_\kappa,\mathcal{L}_{NM})$, 
such that
\begin{equation} \label{eq:esti_conv}
\begin{gathered}
     |\lambda_{NM}-E|<b (N^{-\frac{q-1}{2}}+M^{-\frac{q-1}{2}}) \qquad \text{and} \\ 
      \|(I-\Pi_\mathcal{E})v_{NM}\|<b(N^{-\frac{q-1}{2}}+M^{-\frac{q-1}{2}}),
\end{gathered}
\end{equation}
where $v_{NM}\in\mathcal{L}_{MN}$ solves $Q(\lambda_{NM})
\underline{v_{NM}}=0$.
\end{corollary}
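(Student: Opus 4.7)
The plan is to combine the three pre-existing ingredients---Theorem~\ref{approx}, Lemma~\ref{rel_bound}, and the second part of Theorem~\ref{non_pollution}---after establishing the approximation rate $\delta(N,M):=c(N^{-(q-1)}+M^{-(q-1)})$ for the orthogonal projection $\Pi_{NM}$ onto $\mathcal{L}_{NM}$. The starting observation is that $\mathcal{L}_{NM}$ is a finite direct sum of eigenspaces of $A$, since $A$ is diagonal and the odd-order Hermite functions $\Phi_k$ diagonalise $\tilde{A}$. In particular $\Pi_{NM}$ commutes with $A$ on $\dom(A)$, so that $A(I-\Pi_{NM})=(I-\Pi_{NM})A$.

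The first calculation is a spectral-expansion bound: for any $u=(u_1,u_2)\in\mathcal{E}\subset\dom(A^q)$, expanding each component in the basis $\{\Phi_k\}$ and using the growth of the eigenvalues of $\tilde{A}$, Parseval yields
\begin{equation*}
\|A^p(u-\Pi_{NM}u)\|\lesssim (N^{p-q}+M^{p-q})\|A^q u\|,\qquad p=0,1.
\end{equation*}
Feeding $v:=u-\Pi_{NM}u\in\dom(A)$ into Lemma~\ref{rel_bound} then gives, for $p=0,1,2$,
\begin{equation*}
\|H_\kappa^p v\|\leq a\|Av\|+b\|v\| \lesssim (N^{-(q-1)}+M^{-(q-1)})\|A^q u\|.
\end{equation*}
Taking the supremum of $\|A^q u\|$ over a fixed basis of the finite-dimensional space $\mathcal{E}$ absorbs the $u$-dependence into a constant, and hypothesis \eqref{eq_cond_approx} of Theorem~\ref{approx} holds with $\delta(N,M)=c(N^{-(q-1)}+M^{-(q-1)})$. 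Theorem~\ref{approx} then produces $\lambda_{NM}\in\Spec_2(H_\kappa,\mathcal{L}_{NM})$ with
\begin{equation*}
|\lambda_{NM}-E|\leq b_1\,\delta(N,M)^{1/2}\leq b_1(N^{-(q-1)/2}+M^{-(q-1)/2}),
\end{equation*}
by the subadditivity $\sqrt{s+t}\leq\sqrt{s}+\sqrt{t}$, which is the first estimate in \eqref{eq:esti_conv}.

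For the second estimate, note that since $q>1$ the preceding bound is smaller than $d_E/2$ once $N,M$ are large enough, so the interval on the left of \eqref{eq:unique} meets $\Spec(H_\kappa)$ only at $E$ and the second half of Theorem~\ref{non_pollution} applies. Using $|\Im\lambda_{NM}|\leq|\lambda_{NM}-E|$ gives
\begin{equation*}
\frac{\|(I-\Pi_{\mathcal{E}})v_{NM}\|}{\|v_{NM}\|}\leq\frac{|\Im\lambda_{NM}|}{d_E}\leq\frac{b_1}{d_E}(N^{-(q-1)/2}+M^{-(q-1)/2}),
\end{equation*}
which, after enlarging $b$ to accommodate both bounds and (if needed) normalising $v_{NM}$, gives the remainder of \eqref{eq:esti_conv}. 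The only non-routine step is the spectral-expansion bound for $\|A^p(u-\Pi_{NM}u)\|$, which is clean thanks to the commutativity of $\Pi_{NM}$ with $A$; the finitely many small values of $N,M$ not covered by the ``large enough'' requirement above can be absorbed by further enlarging $b$.
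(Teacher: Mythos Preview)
Your proposal is correct and follows essentially the same route as the paper. The paper reduces the corollary to the tail estimate \eqref{eq:app1}, namely $\sum_{k\geq n}|\langle \tilde{A}^r v_j,\Phi_k\rangle|^2\lesssim n^{-2(q-r)}$ for $r=0,1$, which is exactly your spectral-expansion bound $\|A^p(u-\Pi_{NM}u)\|\lesssim(N^{p-q}+M^{p-q})\|A^qu\|$ written componentwise; it then invokes Lemma~\ref{rel_bound} and Theorem~\ref{approx} just as you do, and (as announced in the sentence preceding the corollary) relies on Theorem~\ref{non_pollution} for the eigenvector estimate. Your write-up is slightly more explicit in spelling out the use of \eqref{eq_approx_efu} and in handling the finitely many small $N,M$, but the underlying argument is the same.
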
 
\begin{proof}
Let $v=\begin{pmatrix} v_1 \\ v_2 \end{pmatrix}\in \mathcal{E}$ 
normalised by $\|v\|=1$. By virtue of Lemma~\ref{rel_bound} and Theorem~\ref{approx}, 
the desired conclusion follows if we are able to find $b>0$ such that
\begin{equation} \label{eq:app1}
   \sum_{k=n}^\infty |\langle \tilde{A}^r v_j,\Phi_k\rangle|^2<bn^{-2(q-r)}, 
\end{equation}
for $r=0,1$ and $j=1,2$. In order to show \eqref{eq:app1}, note that
the hypothesis $v\in \dom(A^q)$ ensures
\begin{align*}
   (2n)^{2(q-r)}\sum_{k=n}^\infty|\langle \tilde{A}^rv_j,\Phi_k\rangle |^2
   & \leq \sum_{k=n}^\infty (2k+1)^{2q}|\langle v_j,\Phi_k\rangle |^2
   \\ & =\sum_{k=n}^\infty |\langle \tilde{A}^qv_j,\Phi_k\rangle |^2
   \to 0
\end{align*}
as $n\to \infty$. \end{proof}

\begin{remark} \label{remark:disc}
Suppose that the number of degrees of freedom $M+N=2n$ is 
fixed. Modulo the constant $b$, the  bound on the right side of
\eqref{eq:esti_conv} is optimal at $(N,M)=(n,n)$. This suggests that
an optimal rate of approximation might be achieved by choosing an equal number of upper/lower spinor components in \eqref{eq_testspaces}. Contrary to this presumption, and   
depending on the potential $V$, the numerical evidence we present in 
sections~\ref{subsection:kin_umbalance} and \ref{subsection:conv_num},
show that the residual on 
the right side of \eqref{eq:iso} can in some cases decrease 
significantly (over $18\%$ in some cases) by suitably 
choosing $N\not= M$.
\end{remark}

We now explore precise conditions on the potential, in 
order to guarantee the hypothesis of Corollary~\ref{corollary:conv}. 

\begin{lemma}\label{lemma:regular}
Let  $\phi_{\mathrm{sc},\mathrm{el},\mathrm{am}}\in C^\infty(0,\infty)$ be such that $\phi_{\mathrm{sc},\mathrm{el},\mathrm{am}}(r)\to 0$
as $r\to \infty$. Assume additionally that $r\mapsto r^\alpha \phi_{\mathrm{sc},\mathrm{el},\mathrm{am}}(r)$ 
are locally bounded for some $\alpha\in(0,1)$. 
Let $H_\kappa u=Eu$.  For sufficiently small $a>0$,
\[
     \|e^{ar}u\|_{H^p(0,\infty)}<\infty \qquad \forall p\in\N.
\]
\end{lemma}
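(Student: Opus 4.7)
The statement asserts exponential decay in every Sobolev norm for eigenfunctions lying in the spectral gap. The plan is to split the proof into two independent pieces: (i) a Combes-Thomas style $L^2$-exponential decay estimate, exploiting the fact that $E$ sits in the gap of the essential spectrum of $H_\kappa$; and (ii) a regularity bootstrap based on the first-order ODE form of the eigenvalue equation.

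For step (i), the starting observation is that $E\in(-1,1)$ lies at positive distance $d:=\min\{1-E,1+E\}$ from $\Spec_{\mathrm{ess}}(H_\kappa)=(-\infty,-1]\cup[1,\infty)$. A direct computation gives
\[
e^{ar}H_\kappa e^{-ar}=H_\kappa+a\begin{pmatrix}0 & 1\\-1 & 0\end{pmatrix},
\]
so conjugation by the weight is a bounded antisymmetric perturbation of size $\mathcal{O}(a)$. I would apply the conjugated operator to $\eta_R(r)u$, where $\eta_R(r):=\min\{e^{ar},e^{aR}\}$ is a truncated weight, and run the standard energy identity. The antisymmetric structure of the perturbation makes its real part vanish and leaves an estimate of the form $(d-\mathcal{O}(a))\|\eta_R u\|^2\leq C\|u\|^2$, which is uniform in $R$ once $a<a_0(d)$ is small enough. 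Letting $R\to\infty$ yields $e^{ar}u\in L^2(0,\infty)^2$.

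For step (ii), rewrite $H_\kappa u=Eu$ as the first-order system $\partial_r u=M(r)u$, with
\[
M(r):=\begin{pmatrix}-\kappa/r-\phi_{\mathrm{am}} & E+1+\phi_{\mathrm{sc}}-\phi_{\mathrm{el}}\\ 1+\phi_{\mathrm{sc}}+\phi_{\mathrm{el}}-E & \kappa/r+\phi_{\mathrm{am}}\end{pmatrix}.
\]
Multiplying by $e^{ar}$ gives $\partial_r(e^{ar}u)=(M(r)+aI)(e^{ar}u)$. On a tail $(R_0,\infty)$ the smoothness and vanishing of the potentials, together with smoothness of $\kappa/r$, yield locally bounded $C^\infty$ coefficients, and successive differentiation of the ODE combined with the $L^2$ bound from step (i) inductively shows $e^{ar}u\in H^p(R_0,\infty)^2$ for every $p\in\N$. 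On the remaining compact piece $(0,R_0]$ the weight $e^{ar}$ is uniformly comparable to $1$, so it suffices to control $\|u\|_{H^p(0,R_0)}$; this is obtained by iterating the ODE and absorbing $\kappa/r$ factors via Hardy's inequality, using the hypothesis $r^\alpha\phi_{\mathrm{sc},\mathrm{el},\mathrm{am}}\in L^\infty_{\mathrm{loc}}$ with $\alpha<1$ and the Dirichlet vanishing of $u$ at the origin.

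The main obstacle is step (i). Combes-Thomas is routine for Schr\"odinger operators with smooth potentials, but for the radial Dirac system one must combine the $\kappa/r$ singularity of $H_\kappa$ with the non-self-adjointness of $e^{ar}H_\kappa e^{-ar}$. The key point that closes the estimate is the antisymmetry of the perturbation matrix, which kills the cross-term in the energy identity and preserves the full spectral gap $d$ in the real part. Once the $L^2$ exponential decay is established, the subsequent $H^p$ bootstrap is essentially bookkeeping.
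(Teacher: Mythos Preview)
Your two-step outline (exponential $L^2$ decay, then regularity bootstrap) matches the paper's, but step (ii) is executed very differently and your version has a genuine gap.

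For (i) the paper simply invokes a result of Berthier--Georgescu; your Combes--Thomas sketch is reasonable in spirit, though the energy identity as written needs a localisation/commutator argument to deal with the fact that $H_\kappa-E$ has a kernel (it is not bounded below by $d$ on all of $L^2$).

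For (ii) the paper does \emph{not} differentiate the ODE. It splits $V=V_c+V_\varepsilon$ with $V_c$ compactly supported and carrying the $r^{-\alpha}$ singularity and $V_\varepsilon$ small with bounded derivatives, writes the conjugated resolvent identity
\[
e^{ar}u=-(D_\kappa+V_\varepsilon+ia-E)^{-1}\,e^{ar}V_c u,
\]
and bootstraps in the \emph{fractional} Sobolev scale: the resolvent gains one derivative while multiplication by $V_c$ costs only $\alpha$ (a Sobolev multiplier result from Triebel), so each pass gains $1-\alpha>0$ and every $H^p$ is eventually reached. Your route, by contrast, repeatedly differentiates $\partial_r u=M(r)u$ and thereby introduces $M',M'',\ldots$, hence the classical derivatives $\phi',\phi'',\ldots$. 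None of these are controlled by the stated hypotheses: $\phi\in C^\infty(0,\infty)$, $\phi\to0$ and $r^\alpha\phi\in L^\infty_{\mathrm{loc}}$ allow $\phi'$ to be unbounded both as $r\to0^+$ and as $r\to\infty$. So ``locally bounded $C^\infty$ coefficients'' on $(R_0,\infty)$ does not give you $L^\infty$ bounds on $M^{(j)}$, and the inductive step from $H^p$ to $H^{p+1}$ fails already at $p=1$ on the tail.

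Near $r=0$ there is a second obstruction that your phrase ``absorb $\kappa/r$ factors via Hardy'' glosses over. Hardy gives $u/r\in L^2$ from $u(0)=0$, but iterating to control $u/r^p$ would require the first $p-1$ derivatives of $u$ to vanish at $0$; the eigenfunction vanishes only to order $|\kappa|$ there, so for $|\kappa|=1$ the iteration breaks at the second step. Indeed, $\partial_r^2 u$ contains a term $(\kappa^2 I-A)u/r^2$ whose control requires exploiting the specific coupling structure, not a blanket Hardy bound. The paper's fractional-step bootstrap (gaining $1-\alpha$ rather than $1$ per iteration, and never differentiating $V_c$ pointwise) is precisely what circumvents both difficulties.
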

\begin{proof}
See \cite[Corollary 3.1]{BerthierGeorgescu}. Let 
\begin{equation*}
D_\kappa=\begin{pmatrix}
1&-\frac{d}{dr}+\frac{\kappa_j}{r}\\
\frac{d}{dr}+\frac{\kappa_j}{r}&-1
\end{pmatrix} \quad \text{and} \quad
V=\begin{pmatrix}
\phi_{\mathrm{sc}}+\phi_{\mathrm{el}}&\phi_{\mathrm{am}}\\
\phi_{\mathrm{am}}&-\phi_{\mathrm{sc}}+\phi_{\mathrm{el}}
\end{pmatrix}.
\end{equation*}
For any $0<\e<\min|E\pm 1|$, we can always separate $V=V_c+V_\e$ 
where:
\begin{itemize}
\item[{(a)}] $V_c$ is smooth, it has compact support and a singularity of order  $O(r^{-\alpha})$ at the origin,
\item[{(b)}]$V_\e$ is smooth with bounded derivatives and $\|V_\e\|_\infty<\e$.
\end{itemize}
Then
\begin{equation*}
 u=-(D_\kappa+V_\e-E)^{-1}V_cu.
\end{equation*}
Multiplying this identity by $D_\kappa^p$ and $e^{ar}$, yields 
\begin{equation} \label{eq:bst}
D_\kappa^p e^{ar}u=-D_\kappa^p(D_\kappa+V_\e+i a-E)^{-1}
e^{ar}V_cu.
\end{equation}

Let $\sigma\in\R$. On the one hand, $(D_\kappa+V_\e+i a-E)^{-1}$ is a bounded operator from $H^\sigma(0,\infty)$ to $H^{\sigma+1}(0,\infty)$. On the other hand, multiplication by
$V_c$ is a bounded operator from $H^\sigma(0,\infty)$ into $H^{\sigma-\alpha}(0,\infty)$.
Indeed, note that \cite[Theorem 2.1(i) for $p=q=2$]{Triebel} yield the latter result for $\sigma\in \N$ after commutation and iteration, then duality and interpolation ensure it for all $\sigma\in \R$. 
Therefore, as $(D_\kappa+V_\e+\rmi a-E)^{-1}V_c$ is bounded from $H^{\sigma}(0,\infty)$ to $H^{\sigma+(1-\alpha)}(0,\infty)$,
\eqref{eq:bst} and a standard bootstrap argument ensure the desired conclusion.
\end{proof}

We remark that, by using \cite[Lemma 5.1]{Hislop} (which can be  modified in order to get ride of the term $\frac{1}{2}$ in its assumption $(i)$), necessarily $a<\sqrt{(1-\e)^2-E^2}$ for the above lemma to hold true.


\begin{corollary} \label{corollary:approx_smooth}
Assume that $\phi_{\mathrm{sc},\mathrm{el},\mathrm{am}}$ are as in Lemma~\ref{lemma:regular}.
Let $E$ be an isolated eigenvalue of $H_\kappa$ of finite multiplicity with associated eigenspace $\mathcal{E}$.
Let $\mathcal{L}_{NM}$ be defined by \eqref{eq_testspaces}. For $b>0$ large enough and independent of $N$ or $M$, we can always find 
a sequence $\lambda_{NM}\in \Spec_2(H_\kappa,\mathcal{L}_{NM})$, 
such that \eqref{eq:esti_conv} holds true if $q<5/4$.
\end{corollary}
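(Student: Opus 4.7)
The strategy is to reduce the corollary to Corollary~\ref{corollary:conv} by verifying its two hypotheses for every $q\in(1,5/4)$: the splitting \eqref{eq:condpot} of the potentials and the inclusion $\mathcal{E}\subset\dom(A^q)$.

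The first hypothesis will follow directly from the standing assumptions on $\phi_{\mathrm{sc},\mathrm{el},\mathrm{am}}$. I would set $\chi_\bullet:=\phi_\bullet\bone_{(0,1]}$ and $\psi_\bullet:=\phi_\bullet-\chi_\bullet$. The local bound $|r^\alpha\phi_\bullet(r)|\leq c$ combined with $\alpha<1$ gives $|\chi_\bullet(r)|\leq c r^{-\alpha}\leq c r^{-1}$ on $(0,1]$, while smoothness of $\phi_\bullet$ on $[1,\infty)$ together with $\phi_\bullet(r)\to 0$ at infinity produces $\psi_\bullet\in L^\infty(0,\infty)$.

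For the second and central hypothesis, I would first apply Lemma~\ref{lemma:regular} to each $u\in\mathcal{E}$ to obtain $e^{ar}u\in H^p(0,\infty)^2$ for every $p\in\N$ and some $a>0$. In particular, $u\in C^\infty((0,\infty))^2$, the Dirichlet condition inherent to $\dom(H_\kappa)$ forces $u(0)=0$, and every product $\langle r\rangle^m\p_r^j u$ lies in $L^2$. The remaining task is to identify $\dom(A^q)$ sharply enough to conclude that $u$ belongs to it for $q<5/4$. Using odd reflection across $r=0$, the diagonal Dirichlet harmonic oscillator $A=(-\p_r^2+r^2)I_{\C^2}$ is unitarily equivalent to the restriction of $-\p_r^2+r^2$ on $L^2(\R)^2$ to its odd subspace, whose $s$-th power domain is the Shubin-type space $\{v\in H^{2s}(\R)^2:\langle r\rangle^{2s}v\in L^2\}$. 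Pulling back to $(0,\infty)$, the compatibility conditions required by the odd extension reduce, for $2s<5/2$, to the single Dirichlet condition $v(0)=0$, with the next condition $v''(0)=0$ entering only at $s\geq 5/4$. Combining this characterization with the previous step gives $u\in\dom(A^q)$ for every $q\in(1,5/4)$.

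Once both hypotheses are in place, Corollary~\ref{corollary:conv} applied with any $q<5/4$ yields \eqref{eq:esti_conv} directly. The genuinely hard part is the sharp identification of $\dom(A^q)$ at non-integer $q$: the threshold $5/4$ reflects precisely the point at which the second Dirichlet compatibility $v''(0)=0$ begins to be enforced, a condition that is generically violated by eigenfunctions of $H_\kappa$ because of the $\kappa/r$ term in its off-diagonal, and this is exactly why the constant $5/4$ appears in the statement rather than something larger.
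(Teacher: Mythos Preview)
Your proposal is correct and follows the same overall architecture as the paper --- reducing to Corollary~\ref{corollary:conv} by verifying $\mathcal{E}\subset\dom(A^q)$ for $q<5/4$ --- but the core technical step is carried out by a genuinely different route.

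The paper does not invoke any characterisation of $\dom(A^q)$ via Shubin spaces or odd reflection. Instead it estimates the Hermite coefficients $\langle v_j,\Phi_k\rangle$ directly: writing $h_{2k+1}=\tfrac{1}{2(2k+2)}h_{2k+2}'$ and integrating by parts repeatedly with the creation operator $F=\partial_r-r$, one picks up no boundary contribution at the first two steps (because $v_j(0)=0$ and $h_{2k+3}(0)=0$), but at the third step the term $F^2v_j(0)\,h_{2k+4}(0)$ survives. Stirling's formula then gives $\langle v_j,\Phi_k\rangle\sim k^{-7/4}$, which is exactly the summability threshold $q<5/4$ in \eqref{eq:app2}. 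This argument is entirely self-contained and elementary: it needs only \eqref{eq:der}, \eqref{eq:her0} and Lemma~\ref{lemma:regular}.

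Your route via odd reflection is more conceptual and makes the mechanism transparent --- the threshold $5/4$ is precisely where the trace condition $v''(0)=0$ starts to be enforced in $H^{2q}(\R)$, and this is the same obstruction the paper sees as the boundary term $F^2v_j(0)$. The cost is that you rely on the identification of $\dom(B^q)$ with the Shubin scale $\{v\in H^{2q}(\R):\langle r\rangle^{2q}v\in L^2\}$ for non-integer $q$, which, while standard, is external machinery that the paper avoids. Both approaches pinpoint the same obstruction; the paper's is more hands-on, yours explains more clearly \emph{why} $5/4$ is the right exponent.
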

\begin{proof}
Let $v\in\mathcal{E}$ be as in the proof of 
Corollary~\ref{corollary:conv}. We show that
\begin{equation} \label{eq:app2}
   \sum _{k=0}^\infty (2k+1)^{2q}|\langle 
   v_j,\Phi_k\rangle|^2<\infty,
\end{equation}
for $j=1,2$. Let $F=(\partial_r-r)$. Integration by parts and
\eqref{eq:der}, yield 
\begin{equation*} \label{eq:app3}
\begin{aligned}
   \langle & v_j,\Phi_k\rangle  = \frac{1}{c_{2k+1}}
\int_0^\infty v_j(r)h_{2k+1}(r)e^{-r^2}{2} \ud r \\
& = \frac{1}{2(2k+2)c_{2k+1}}
\int_0^\infty v_j(r)h_{2k+2}'(r)e^{-\frac{r^2}{2}} \ud r  \\
&= \frac{-1}{2(2k+2)c_{2k+1}}
\int_0^\infty Fv_j(r)h_{2k+2}(r)e^{-\frac{r^2}{2}} \ud r  \\
&= \frac{1}{2^2(2k+2)(2k+3)c_{2k+1}}
\int_0^\infty Fv_j(r)h_{2k+3}'(r)e^{-\frac{r^2}{2}} \ud r  \\
&= \frac{1}{2^2(2k+2)(2k+3)c_{2k+1}}
\int_0^\infty F^2v_j(r)h_{2k+3}(r)e^{-\frac{r^2}{2}} \ud r  \\
&= \frac{-1}{2^3(2k+2)(2k+3)(2k+4)c_{2k+1}}\left(
F^2v_j(0)h_{2k+4}(0)+
\int_0^\infty F^3v_j(r)h_{2k+4}(r)e^{-\frac{r^2}{2}} \ud r\right)\\
&=\frac{-F^2v_j(0)h_{2k+4}(0)}{2^3(2k+2)(2k+3)(2k+4)c_{2k+1}}
+\frac{\int_0^\infty F^4v_j(r)h_{2k+5}(r)e^{-\frac{r^2}{2}} \ud r}{2^4(2k+2)\ldots(2k+5)c_{2k+1}}=a_1+a_2.
\end{aligned}
\end{equation*}
Identity \eqref{eq:her0} alongside with the fact that $|F^2v_j(0)|<\infty$ (see Lemma~\ref{lemma:regular}) and the Stirling formula, ensures that $a_1\sim n^{-7/4}$ as $n\to \infty$.  On the other hand, 
Lemma~\ref{lemma:regular} ensures that $F^4v_j\in L^2(0,\infty)$. Thus,
since
\[
     \frac{c_{2k+5}}{2^4(2k+2)\ldots(2k+5)c_{2k+1}}\sim
     n^{-2},
\]
$a_2=O(n^{-2})$ as $n\to\infty$. This guarantees \eqref{eq:app2}
for $q<5/4$.
\end{proof}

According to this corollary, for smooth potentials, the order of approximation of the quadratic projection method
to any eigenvalue $E$ of $H_\kappa$ should be at least a power
$1/8$ of the dimension of $\mathcal{L}_{MN}$. The numerical experiment performed in Section~\ref{subsection:conv} show that this bound
improves substantially for particular potentials. See Figure~\ref{fig:rate_unbalance} (right) and Table~\ref{table:expo1} (right).

\begin{remark} \label{remark:speed}
The arguments involved in the proof of this theorem show that, the behaviour of the wave functions at the singularity and its regularity are the main ingredients responsible for controlling the speed of approximation when using a Hermite basis \eqref{eq_testspaces}. 
\end{remark}

\section{Some numerical experiments}
\label{section:numerical}

We now report on various numerical experiments performed for very simple radially symmetric potentials. It is not our intention to show
accurate computations, but rather to illustrate how the method discussed
in Section~\ref{section:qpm} 
can be implemented in order to rigourously enclose eigenvalues and
compute eigenfunctions of the Dirac operator.

\subsection{Ground state of the purely coulombic potential}
\label{subsection:coul}
We begin by considering the analytically solvable case of $V$ being
a radially symmetric purely coulombic potential: $\phi_{\mathrm{sc}}=\phi_{\mathrm{am}}=0$, 
$\phi_{\mathrm{el}}(r)=\gamma/r$. Here $-\sqrt{3}/2<\gamma<0$.
The eigenvalues of $H_{\kappa}$ are given explicitly
by
\begin{equation} \label{eq:eva}
   E_j=\left( 1+\frac{\gamma^2}{(j+\sqrt{\kappa^2-\gamma^2})^2}  \right)^{-1/2}.
\end{equation}
Note that $E_j\to 1$ as $j\to\infty$ for all values of $\kappa$.
The ground state of the full coulombic Dirac operator $H$ is achieved
when $\kappa=-1$ and $j=0$.

In Figure~\ref{fig:1} we superimpose the computation of 
$\Spec_2(H_{-1},\mathcal{L}_n)$ for two values of $n$,
in a narrow box near the interval $[-3,3]$. Here $\mathcal{L}_n$ is given 
by \eqref{eq_testspaces}. For the set of parameters considered 
($\kappa=-1$ and $\gamma=-1/2$), \eqref{eq:eva} yields
$E_0\approx 0.866025$, $E_1\approx 0.965925$, $E_2\approx 0.9851210$,
$E_3\approx 0.99174012$ and $E_4\approx 0.9947623$. 
A two sided approximation of $E_0$ is achieved
from the point $\lambda\in \Spec_2(H_{-1},\mathcal{L}_{1000})$ at
$\lambda\approx 0.8661+0.0236i$. According to \eqref{eq_non_poll}, 
there should be an eigenvalue of $H_{-1}$ in the interval 
$[0.8661-0.0236,0.8661+0.0236]$. This eigenvalue happens to be $E_0$.
For $E_1$, $E_2$ and the pair ($E_3$, $E_4$), we can also derive similar
conclusions. Note that 
$\Spec_{\text{ess}}(H_{-1})$ is also revealed by 
points of $\Spec_2(H_{-1},\mathcal{L}_n)$ seemingly accumulating 
at $(-\infty,-1]\cup[1,\infty)$.

In Figure~\ref{fig:2} we show approximation of the corresponding ground
wave function associated with $E_0$. We have also depicted the
analytical eigenfunction:
\begin{equation} \label{eq:ge}
            u_0(r)=\nu_0\begin{pmatrix} \gamma \\ (1-\gamma^2)^{1/2}-1 
\end{pmatrix}
   r^{\sqrt{1-\gamma^2}}e^{-(\gamma E_0/\sqrt{1-\gamma^2})r},
\end{equation}
where $\nu_0$ is chosen so that $\|u_0\|=1$.
From this picture it is clear that, at least qualitatively, $u_0(r)$ seems to be captured quite well even for small values of $n$. 

We show a quantitative analysis of the calculation of $u_0$ in 
Table~\ref{table:1}. In the middle column we compute the residual 
on the left of \eqref{eq_approx_efu} and on the last column we 
compute the right hand side of  \eqref{eq_approx_efu}. It is quite 
remarkable that the actual residuals are over $74\%$ smaller than the 
error predicted by Theorem~\ref{non_pollution}.

\subsection{$\beta$-Dependence of the sub-coulombic potential}
\label{subsection:sub_coul}
We now investigate the case of the potential being
radially symmetric and sub-coulombic: $\phi_{\mathrm{sc}}=\phi_{\mathrm{am}}=0$, 
$\phi_{\mathrm{el}}(r)=\gamma/r^{\beta}$ for $\beta\in(0,1)$. 
Here $-1<\gamma<0$. The purpose of this experiment is to 
show how Theorem~\ref{non_pollution}
provides a priori information about $\Spec_{\text{disc}}(H_\kappa)$ even for small values of $n$. Note that \eqref{eq:spess} is guaranteed from \cite[Theorems~4.7 and 4.17]{Thaller}. Furthermore $H$ has infinitely many eigenvalues according to \cite[Theorem~4.23]{Thaller}.

In Figure~\ref{fig:3}, we show computation of the ground state of $H_{-1}$,
for $\beta=0.1:0.1:1$ and $\gamma=-1/2$. As $\beta\to 1^-$, 
$E_0\to 0.8661$, the ground eigenvalue of the coulombic Dirac operator.
As $\beta\to 0^+$ the eigenvalue remains above $1/2$. Note that the
family of operators $H_{\kappa}$ is not analytic at $\beta=0$ for this potential.
For $\beta=0$ the spectrum becomes
\[
   \Spec(H_\kappa)=(-\infty,-3/2]\cup[1/2,\infty).
\]
The vertical bars show $|\Im(\lambda)|$, the maximum error in the computation of $E_0\approx \Re(\lambda)$ given by
Theorem~\ref{non_pollution}. For this example we have chosen $n=15$. 
Table~\ref{table:2} contains the data depicted in Figure~\ref{fig:2}.
Observe that the error increases as $\beta\to 0^+$ and $\beta \to 1^-$.
This seems to be a consequence of the fact that $E_0$ becomes closer
to other spectral points at both limits, so $d_{E_0}\to 0$.

\subsection{The inverse harmonic electric potential} 
\label{subsection:smooth_pot}
In this set of experiments we consider another canonical 
example in the theory of Dirac operators: $\phi_{\mathrm{sc}}=\phi_{\mathrm{am}}=0$, 
$\phi_{\mathrm{el}}(r)=\gamma/(1+r^2)$ for $\gamma<0$. The discrete spectrum of $H$ is known to be finite for $-1/8<\gamma<0$ and infinite for $\gamma<-1/8$, \cite{Klaus}. As the parameter $\gamma$ decreases, we expect that eigenvalues will appear at the threshold 1, move through the gap, and leave it at -1. This dynamics is shown in Figure~\ref{fig:evo_smooth_gamma_neg}, for the ground eigenvalue of $H_{-1}$.
It is a long standing question whether the eigenvalues become resonances when they re-enter the spectrum.

In Figure~\ref{fig:eigenfu_smooth} we depict the
first three eigenfunctions of $H_{-1}$ for
$\gamma=-4$. They correspond to the
eigenvalues $E_0\approx -0.3955$, $E_1\approx0.6049$ and $E_2\approx 0.9328$. See also Figure~\ref{fig:evo_unbalance}.
Note that for $\gamma=-4$, both components of
the eigenfunctions appear to obey a Sturm-Liouville type
oscillation hierarchy.

\subsection{Upper/lower spinor component balance and approximation of eigenvalues}
\label{subsection:kin_umbalance} 

We now investigate the effects of ``unbalancing'' the basis by choosing $N\not=M$.

In Figure~\ref{fig:unbalance_coulomb}, we have performed the following experiment. Fix the number of degrees of freedom, $\dim(\mathcal{L}_{MN})=200$. Then for $N=10:5:190$ and $M=200-N$, use the quadratic method as well as the Galerkin method to approximate eigenvalues of $H_\kappa$ in the spectral gap $(-1,1)$. We firstly consider $\phi_{\mathrm{sc}}=\phi_{\mathrm{am}}=0$ and $\phi_{\mathrm{el}}(r)=-1/(2r)$. 

The Galerkin method might or might not produce spurious eigenvalues. 
The quadratic method will always provide two-sided non-polluted bounds for the true eigenvalues with a residual, obtained from \eqref{eq_non_poll}, which might change with $N$. See also figures~
\ref{fig:evo_smooth_gamma_neg} and \ref{fig:unbalance_coulomb}. 
The Galerkin method appears to pollute heavily near
the upper end of the gap for $N>M$, as predicted by the considerations of Section~\ref{section:kine_bal}. Moreover,  for the ground state, the minimal $|\Im(\lambda)|$ is not achieved at $N=100$ which corresponds to $N=M$, but rather at some $N>100$. It is remarkable that the residual are reduced significantly (up to $66\%$ for the true residual) when $M(N)/N\approx 1/5$.

If we performed the analogous experiment for the inverse harmonic potential, the conclusion are also rather surprising. See Figure~\ref{fig:evo_unbalance}. The Galerkin method appears to pollute heavily near the upper end of the gap for $N>M$ as predicted in Section~\ref{section:kine_bal}. However, now the approximation is improved by over $16\%$ for $E_0$ and over $18\%$ for $E_1$, if $M(N)/N\approx 3$.

We can explain these phenomena by considering the relation between the components of the exact eigenvectors. 

In the case of a purely coulombic potential, the ground state is given by \eqref{eq:ge} where $\nu_0$ is a real constant. The lower spinor component just differs from the upper one by a scalar factor. When $\gamma\in(0,1)$, the lower component is smaller in modulus than the upper one. Choosing $N>M$, can reduce an upper bound of the residual associated to the first component, while the residual associated to the second component remains small due to the smallness of the lower component. 

In the case of an inverse harmonic purely electric potential, this argument fails, as the two spinor components of the eigenfunction are not a scalar factor of each other, see Figure~\ref{fig:eigenfu_smooth}. If we denote an eigenfunction by $u=(u^{\mathrm{upp}},u^{\mathrm{low}})$,
the figure suggests that $|\partial_r^2 u^{\mathrm{upp}}(0)|<<|\partial_r^2 u^{\mathrm{low}}(0)|$. As $|\partial_r^2\Phi_k(0)|=0$, it is natural to expect that a decrease in the residual is only achieved by choosing a suitable $M>N$.

\begin{remark} \label{remark:nonpo}
Although we can not prove it rigourously, strong evidence suggests that (for any of the potentials considered above) no spurious eigenvalue is produced by the Galerkin method when $N=M$. Why bothering then with more complicated procedures, such as the quadratic projection method, 
to avoid inexistent spectral pollution. A partial answer is, on the one hand, robustness: we do not know a priori whether the Galerkin method pollutes for a given basis. On the other hand, as the experiments of this section suggest, some times forcing a kinetic unbalance into a model might improve convergence properties. 
\end{remark}

\subsection{Convergence properties of the odd Hermite basis}
\label{subsection:conv_num} 
A convergence analysis, as the number of degrees of freedom increases, can be found in Figure~\ref{fig:rate_unbalance} and Table~\ref{table:expo1}.
Due to the discussion of Section~\ref{subsection:kin_umbalance}, we consider this for different ratios $N/M$. 

The right graph shows that the conclusion of Corollary~\ref{corollary:approx_smooth} is far from optimal
for the inverse harmonic potential of 
Section~\ref{subsection:smooth_pot}. As expected from 
Section~\ref{subsection:kin_umbalance}, a faster convergence rate as well as smaller residuals are found if we suitably choose $N<M$.

The left graph corresponds to the coulombic potential discussed in
Section~\ref{subsection:coul}. It clearly shows that the 
order of convergence of $\lambda_n\to E$ does not obey the estimate $|\lambda_n-E|\leq O(n^{-a})$ (for some $a>0$) of Corollary~\ref{corollary:conv}. In fact the convergence rate seems to decrease as we increase the number of degrees of freedom. 
This reduction in the speed of convergence can be prevented by putting $M=f(N)$ for a suitable non-linear increasing function $0<f(x)<x$. The optimal $f(x)$, however, might depend on the eigenvalue to be approximated.

\begin{remark} 
According to Remark~\ref{remark:better}, the actual approximate eigenvalue $\Re(\lambda)$ is correct up to $O(n^{-2a})$, where $a$ is the second  column of Table~\ref{table:expo1}. Furthermore, note that, in the case of the coulombic potential we can compute directly the true residual $|\Re(\lambda)-E|$. From Figure~\ref{fig:unbalance_coulomb} bottom, it is clear that this true residual is substantially smaller than the one estimated 
by $|\Im(\lambda)|$. 
\end{remark}

\appendix
\section{Entries of the matrix polynomial coefficients}
\label{appendix:matr_entries}

The recursive identities satisfied by the Hermite functions allow us to find recursive expressions for the matrix entries of $K$ and $L$ in \eqref{eq_coeff_qpm} when $G=H_\kappa$. Rather than estimating the corresponding inner products by trapezoidal rules, we build the codes involved in 
the numerical experiments performed in Section~\ref{section:numerical} using these explicit expressions. As large factors are cancelled
in these explicit expressions, this approach turns out to be far more accurate. Since some of the calculations are not entirely trivial, we include here the crucial details. 

Let
\begin{gather*}
      T_{1}=\int_0^\infty \Phi_k(r)\Phi_j(r) \ud r, \qquad
      T_{2}(k,j)=\int_0^\infty \Phi_k'(r)\Phi_j(r) \ud r, \\
      T_{3}=\int_0^\infty \frac{1}{r}\Phi_k(r)\Phi_j(r) \ud r, \qquad 
      T_{4}=\int_0^\infty \Phi_k'(r)\Phi_j'(r) \ud r, \\
      T_{5}(k,j)=\int_0^\infty \frac{1}{r}\Phi_k'(r)\Phi_j(r) \ud r, \qquad 
      T_{6}=\int_0^\infty \frac{1}{r^2}\Phi_k(r)\Phi_j(r) \ud r, \\
    F_1=\int_0^\infty \phi_{\mathrm{el}}(r)\Phi_k(r)
   \Phi_j(r) \ud r, \qquad
    F_2=\int_0^\infty \phi_{\mathrm{el}}^2(r)\Phi_k(r)
   \Phi_j(r) \ud r, \\
    F_3=\int_0^\infty \frac{\phi_{\mathrm{el}}(r)}{r}\Phi_k(r)
   \Phi_j(r) \ud r, \qquad
    F_4(k,j)=\int_0^\infty \phi_{\mathrm{el}}(r)\Phi_k'(r)
   \Phi_j(r) \ud r. 
\end{gather*}
Here and below we stress the dependence on $j,k$ when the coefficient
is not symmetric with respect to these indices. 
Denote 
\[
    \Psi_{k,1}=\begin{pmatrix} \Phi_k \\ 0 \end{pmatrix}, \qquad
    \Psi_{j,2}=\begin{pmatrix} 0 \\ \Phi_j \end{pmatrix}.
\]
Then $\langle H_\kappa \Psi_{kl}, \Psi_{jm}\rangle$ are given according to Table~\ref{table:lin_coeff} and $\langle H_\kappa \Psi_{kl}, H_\kappa \Psi_{jm}\rangle$ are given according to Table~\ref{table:qua_coeff}.

\begin{table}[h!]
\centerline{\begin{tabular}{|c|c|c|}\hline  & $m=1$  & $m=2$ \\
\hline 
$l=1$ & $T_1+F_1$ & $T_2(k,j)+\kappa T_3$ \\
\hline
$l=2$ & $-T_2(k,j)+\kappa T_3$ & $-T_1+F_1$ \\
\hline
\end{tabular}}
\caption{Term $\langle H_\kappa \Psi_{kl}, \Psi_{jm}\rangle$.} \label{table:lin_coeff}
\end{table}
\begin{table}[h!]
\centerline{\begin{tabular}{|c|c|c|}\hline  & $m=1$  & $m=2$ \\
\hline 
$l=1$ & $T_1+T_4+\kappa T_5(k,j)+\kappa T_5(j,k)+$ & 
$-T_2(k,j)-T_2(j,k)+2\kappa F_3+$ \\ &
$\kappa^2T_6+2F_1+F_2$ & $F_4(k,j)-F_4(j,k)$ 
\\ \hline
$l=2$ & $-T_2(k,j)-T_2(j,k)+2\kappa F_3+$ & 
$T_1+T_4-\kappa T_5(k,j)-\kappa T_5(j,k)+$ \\ & $-F_4(k,j)+F_4(j,k)$ & 
$\kappa^2T_6-2F_1+F_2$ \\
\hline
\end{tabular}}
\caption{Term $\langle H_\kappa \Psi_{kl}, H_\kappa\Psi_{jm}\rangle$.} \label{table:qua_coeff}
\end{table}

For $m,n\in \N \cup {0}$, let
\[
    P(n)=\left\{ \begin{array}{ll}
    \prod_{l=1}^{n}\left(1+\frac{1}{2l}\right) &n\not=0 \\
    1 & n=0\end{array} \right. 
\]
and
\begin{equation} \label{eq:term_I}
    I(m,n)=\frac{1}{c_m c_n}\int_0^\infty h_m(r)h_n(r)e^{-r^2} \ud r.
\end{equation}

\begin{lemma}  \label{lemma:term_I}
\[
     I(m,n)=\left\{\begin{array}{ll} \delta_{mn} &
     m\equiv n\text{ (mod 2)} \\
     \frac{(-1)^{k-j+1}\sqrt{2P(k)P(j)}}{(2k-2j-1)\sqrt{\pi(2k+1)}}
     & m=2k,\,n=2j+1.   
     \end{array} \right.
\]
\end{lemma}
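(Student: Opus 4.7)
The plan is to split into the two cases asserted by the statement. When $m\equiv n\pmod 2$, the product $h_m(r)h_n(r)e^{-r^2}$ is an even function of $r$, because each Hermite polynomial has parity $(-1)^n$, so the one-sided integral equals one half of the integral over $(-\infty,\infty)$. The standard orthogonality relation $\int_{-\infty}^\infty h_m h_n e^{-r^2}\,\ud r=2^n n!\sqrt{\pi}\delta_{mn}$ recalled in the excerpt, together with the definition $c_n^2=2^{n-1}n!\sqrt{\pi}$, then immediately yields $I(m,n)=\delta_{mn}$.

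For the mixed-parity case the full-line integral vanishes, so a different device is needed. I would exploit Rodrigues' formula, which rearranges to $h_n(r)e^{-r^2}=-\tfrac{d}{dr}\bigl[h_{n-1}(r)e^{-r^2}\bigr]$. Integrating by parts in $\int_0^\infty h_m h_n e^{-r^2}\,\ud r$ and using \eqref{eq:der} produces
\[
\int_0^\infty h_m h_n e^{-r^2}\,\ud r=h_m(0)h_{n-1}(0)+2m\int_0^\infty h_{m-1}h_{n-1}e^{-r^2}\,\ud r.
\]
Applying the same identity with the roles of $m$ and $n$ exchanged and subtracting the two formulas eliminates the nested integral whenever $m\neq n$ and yields the closed form
\[
\int_0^\infty h_{m-1}h_{n-1}e^{-r^2}\,\ud r=\frac{h_n(0)h_{m-1}(0)-h_m(0)h_{n-1}(0)}{2(m-n)}.
\]
The step I expect to be the least automatic is precisely this pair-subtraction trick: iterating a single recursion would leave a base integral to be evaluated separately, whereas subtraction collapses everything to boundary values at the origin in one stroke.

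To finish, I would specialise by setting $m=2k+1$ and $n=2j+2$, so that $m-1=2k$ and $n-1=2j+1$ are the indices of interest, with $m-n=2k-2j-1$. Both $h_m(0)=h_{2k+1}(0)$ and $h_{n-1}(0)=h_{2j+1}(0)$ vanish by \eqref{eq:her0}, leaving a single nonzero product. Substituting $h_{2k}(0)=(-1)^k(2k)!/k!$ and $h_{2j+2}(0)=(-1)^{j+1}(2j+2)!/(j+1)!$ and simplifying via $(2j+2)!/(j+1)!=2(2j+1)!/j!$ gives
\[
\int_0^\infty h_{2k}h_{2j+1}e^{-r^2}\,\ud r=\frac{(-1)^{k+j+1}(2k)!(2j+1)!}{(2k-2j-1)\,k!\,j!}.
\]
Dividing by $c_{2k}c_{2j+1}=2^{k+j-1/2}\sqrt{(2k)!(2j+1)!\pi}$ and rewriting the result through the telescoping identity $P(n)=\prod_{l=1}^n\frac{2l+1}{2l}=(2n+1)!/\bigl(2^{2n}(n!)^2\bigr)$, combined with the observations $(-1)^{k+j+1}=(-1)^{k-j+1}$ and $\sqrt{(2k+1)!/(2k+1)}=\sqrt{(2k)!}$, rearranges the expression into the stated closed form. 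This last bookkeeping is purely algebraic.
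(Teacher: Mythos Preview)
Your argument is correct. The same-parity case is handled identically to the paper. For the mixed-parity case, however, you take a genuinely different route: the paper derives the two-step recursion
\[
\int_0^\infty h_{2k}h_{2j+1}e^{-r^2}\,\ud r
=\text{const}\cdot\int_0^\infty h_{2k-2}h_{2j-1}e^{-r^2}\,\ud r
\]
by repeated use of Rodrigues' formula and \eqref{eq:der}, and then iterates this down to a base case. Your subtraction trick instead writes the single-step identity twice with the roles of $m$ and $n$ exchanged and subtracts, which cancels the nested integral and leaves only boundary values $h_\cdot(0)$. This collapses the whole computation to a single closed formula without ever iterating or tracking a base case, and the subsequent algebra with $P(n)=(2n+1)!/\bigl(2^{2n}(n!)^2\bigr)$ is routine. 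The paper's recursion is perhaps more mechanical to discover, while your approach is shorter and avoids any bookkeeping of intermediate boundary contributions.
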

\begin{proof}
If $m\equiv n\text{ (mod 2)}$, then $h_m(r)h_n(r)$ is an even function
for $r\in \R$ and so
\[
   \int_0^\infty h_m(r)h_n(r)e^{-r^2} \ud r =\frac{1}{2}
   \int_{-\infty}^\infty h_m(r)h_n(r)e^{-r^2} \ud r.
\]  
On the other hand, if $m\not\equiv n\text{ (mod 2)}$, 
say $m=2k$ and $n=2j+1$, \eqref{eq:der} and integration by parts yield
\begin{align*}
   \int_0^\infty h_{2k}(r)h_{2j+1}(r)e^{-r^2} \ud r&=
   \int_0^\infty h_{2j+1}(r)(e^{-r^2})^{(2k)} \ud r\\
   &=-\int_0^\infty h_{2j+1}'(r)(e^{-r^2})^{(2k-1)} \ud r\\
   &=-2(2j+1) \int_0^\infty h_{2j}(r)(e^{-r^2})^{(2k-1)} \ud r\\
   &=2^2(j+1)(2k-1)\int_0^\infty h_{2k-2}(r)h_{2j-1}(r)e^{-r^2} \ud r.
\end{align*}
The corresponding expression for $I(m,n)$ can be obtained in a straightforward manner from these two assertions.
\end{proof}

\begin{lemma} \label{lemma:coeff_KL}
\begin{gather*}
    T_{1}(k,j)=\delta_{jk}, \qquad
    T_2(k,j)=\frac{4(-1)^{k-j+1}(k-j)}{\sqrt{\pi}(2k-2j-1)(2k-2j+1)}
     \sqrt{P(j)P(k)}, \\
    T_{3}(k,j)=\frac{2(-1)^{k-j+1}\sqrt{P(j)}}{\sqrt{\pi}\sqrt{P(k)}}\sum_{m=0}^k
    \frac{P(m)}{(2m+1)(2m-2j-1)},\\
    T_{4}(k,j)=\frac{1}{2}\left\{ \begin{array}{ll} -\sqrt{2k(2k+1)}& j=k-1 \\
           4k+3 &j=k \\  -\sqrt{(2k+2)(2k+3)} & j=k+1 \\
            0 &\text{otherwise},        \end{array} \right. \\
   T_{5}(k,j)=\left\{ \begin{array}{ll} 2(-1)^{j-k}
    \sqrt{\frac{P(k)}{P(j)}}& k<j \\
     1 & k=j \\ 0 & k>j, \end{array} \right. \qquad
   T_{6}(k,j)=(-1)^{j-k}2\left\{ \begin{array}{ll}
              \sqrt{\frac{P(j)}{P(k)}} & j\leq k \\
       \sqrt{\frac{P(k)}{P(j)}}& k< j.
   \end{array} \right.
\end{gather*}
\end{lemma}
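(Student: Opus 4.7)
The plan is to reduce every $T_i(k,j)$ to an application of Lemma~\ref{lemma:term_I}, using the Hermite recursions \eqref{eq:der} and \eqref{eq:recu} to rewrite the derivatives $\Phi_k'$ and the reciprocals $\Phi_k/r$ as explicit linear combinations of the \emph{even}-index Hermite functions $\Phi^{\mathrm{even},m}(r):=c_{2m}^{-1}h_{2m}(r)e^{-r^2/2}$, which are themselves orthonormal on $(0,\infty)$ because $h_{2m}h_{2n}e^{-r^2}$ is even. Throughout I will use two bookkeeping identities:
\[
 P(k+1)=\tfrac{2k+3}{2k+2}\,P(k),\qquad \sqrt{P(l)/P(k)}=2^{k-l}\tfrac{k!}{l!}\sqrt{(2l+1)!/(2k+1)!},
\]
the second being immediate from $P(m)=(2m+1)!/(4^m(m!)^2)$. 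The identity $T_1=\delta_{kj}$ is then direct from the stated orthonormality of $\{\Phi_k\}$.

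For the derivative terms, combining \eqref{eq:der} with \eqref{eq:recu} to absorb the $rh_{2k+1}$ contribution of $(e^{-r^2/2})'$ yields
\[
 \Phi_k' \;=\; \sqrt{\tfrac{2k+1}{2}}\,\Phi^{\mathrm{even},k} \;-\; \sqrt{k+1}\,\Phi^{\mathrm{even},k+1}.
\]
Substituted into $T_4$, orthonormality of $\{\Phi^{\mathrm{even},m}\}$ immediately forces $|j-k|\leq 1$ and delivers the tridiagonal formula. Substituted into $T_2$, it reduces to two mixed-parity instances of Lemma~\ref{lemma:term_I}; the two pieces merge into a single term via the first identity above.

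The three singular integrals rest on the observation that, although $h_{2m}/r$ is not polynomial, the quotient $R_k(r):=h_{2k+1}(r)/r$ \emph{is} a polynomial of degree $2k$ because $h_{2k+1}$ is odd. Applying \eqref{eq:recu} at index $2k$ gives $R_k = 2h_{2k} - 4k R_{k-1}$ with $R_0=2$, whose closed solution is
\[
 R_k(r)=2\sum_{l=0}^k(-4)^{k-l}\tfrac{k!}{l!}\,h_{2l}(r).
\]
Substitution of $R_k$ into $T_3$ reduces it to a weighted sum of mixed-parity instances of Lemma~\ref{lemma:term_I}, which collapses after invoking the two bookkeeping identities above. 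The formula for $T_5$ follows by combining the derivative expansion of $\Phi_k'$ with the $R_j$-expansion of $\Phi_j/r$: orthonormality of $\{\Phi^{\mathrm{even},m}\}$ forces a single surviving index $l=m\leq j$, and the same telescoping trick merges the $m=k$ and $m=k+1$ contributions, yielding the three-case formula.

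The hardest case is $T_6$. Expanding both $\Phi_k/r$ and $\Phi_j/r$ via $R_k$, $R_j$ and invoking orthonormality of $\{\Phi^{\mathrm{even},m}\}$ reduces the integral, when $k\geq j$ (by symmetry $T_6(k,j)=T_6(j,k)$), to
\[
 T_6(k,j) \;=\; (-1)^{k+j}\,\frac{k!\,j!\,2^{k+j+1}}{\sqrt{(2k+1)!\,(2j+1)!}}\sum_{l=0}^j\binom{2l}{l}4^{-l}.
\]
Matching this against the claimed $2(-1)^{k-j}\sqrt{P(j)/P(k)}$ reduces to the combinatorial identity
\[
 \sum_{l=0}^j\binom{2l}{l}4^{-l} \;=\; P(j),
\]
which I would prove by induction on $j$ using $P(j+1)/P(j)=(2j+3)/(2j+2)$ and $\binom{2j+2}{j+1}=2(2j+1)\binom{2j}{j}/(j+1)$. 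This identity — a partial-sum evaluation for the series of $(1-4x)^{-1/2}$ at $x=1/4$ — is the only step that is not pure bookkeeping; everything else is a careful reduction to Lemma~\ref{lemma:term_I} combined with the two normalisation identities above.
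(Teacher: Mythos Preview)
Your approach is correct and coherent, but it differs from the paper's proof in its treatment of $T_4$ and especially $T_6$.

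For $T_4$, the paper invokes the harmonic oscillator identity \eqref{eq:eva_ho} and the evenness of $\Phi_k'\Phi_j'$ to write
\[
  T_4(k,j)=\tfrac{1}{2}\int_{-\infty}^\infty(-\Phi_k'')\Phi_j\,\ud r,
\]
then replaces $-\Phi_k''$ by $(4k+3)\Phi_k-r^2\Phi_k$ and expands $r^2\Phi_k$ tridiagonally via \eqref{eq:recu}. Your expansion $\Phi_k'=\sqrt{\tfrac{2k+1}{2}}\Phi^{\mathrm{even},k}-\sqrt{k+1}\,\Phi^{\mathrm{even},k+1}$ gets there just as quickly and is perhaps more transparent.

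The real divergence is $T_6$. The paper bypasses your combinatorial identity entirely: an integration by parts, with the boundary term vanishing because $\Phi_k(0)=0$, gives
\[
  T_6(k,j)=-\int_0^\infty\Bigl(\tfrac{1}{r}\Bigr)'\Phi_k\Phi_j\,\ud r
          =\int_0^\infty\tfrac{1}{r}\bigl(\Phi_k'\Phi_j+\Phi_k\Phi_j'\bigr)\,\ud r
          =T_5(k,j)+T_5(j,k),
\]
from which the stated formula is immediate once $T_5$ is known. Your route instead expands both factors through $R_k$ and $R_j$ and arrives at the partial-sum identity $\sum_{l=0}^j\binom{2l}{l}4^{-l}=P(j)$, which is correct but an extra ingredient. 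The trade-off: your argument is uniform (every $T_i$ is handled by the single mechanism ``expand into even Hermite functions, use orthonormality''), while the paper mixes integration by parts and the oscillator equation to reach each formula by the shortest available path.
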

\begin{proof}
Let $I(m,n)$ be given by \eqref{eq:term_I}.
By virtue of identities \eqref{eq:der} and \eqref{eq:recu}, 
\begin{align*}
\int_0^\infty h_{2k+1}'h_{2j+1}e^{-r^2} \ud r &=
(2k+1)I(2k,2j+1)-2^{-1}I(2k+2,2j+1), \\
\int_0^\infty \frac{1}{r}h_{2k+1}h_{2j+1}e^{-r^2} \ud r &=
\sum_{l=0}^{k}(-1)^l 2^{2l+1}\frac{k!}{(k-l)!}I(2(k-l),2j+1).
\end{align*}
This yields $T_2$ and $T_3$.

Let 
\[
   J(k,j)=\int_0^\infty\frac{1}{r} h_{2k}h_{2j+1}e^{-r^2} \ud r.
\]
Then
\[
    J(k,j)=\left\{ \begin{array}{ll} \frac{
    \sqrt{\pi} 2^{2j}(2k)!(-1)^{j-k}
    j!}{k!} & k\leq j \\
    0 & k>j \end{array}\right.
\]
and
\[
  \int_0^\infty \frac{1}{r}h_{2k+1}'h_{2j+1}e^{-r^2} \ud r =
   2(2k+1)J(k,j)-\delta_{kj}\sqrt{\pi}2^k(2k+1)!.  
\]
This renders $T_5$. Moreover, integration by parts ensures
\[
   T_6=\int_0^\infty \frac{1}{r^2}\Phi_k\Phi_j=-\int_{0}^\infty
   \Big(\frac{1}{r}\Big)'\Phi_k \Phi_j =
   \big(T_5(k,j)+T_5(j,k)\big).
\]

The expression for $T_4$ follows from \eqref{eq:eva_ho} and the identity
\[
   \int_0^\infty \Phi_k'(r)\Phi_j'(r) \ud r=
   \frac{1}{2}\int_{-\infty}^\infty -\Phi_k''(r)\Phi_j(r) \ud r.
\] 
\end{proof}

From $E_1$ and $E_2$ in the next lemma,
one easily obtains explicit formulae for
$F_n$ when $\phi_{\mathrm{el}}(r)=\gamma/r^\beta$.

\begin{lemma} \label{lemma:sub_cou}
For $\beta\in[0,1]$ and $\alpha\in[-1,2]$, let
\[
E_1(\beta,k,j)=\int_0^\infty \frac{1}{r^\beta}\Phi_k'(r)\Phi_j(r) \ud r, 
      \qquad E_2(\alpha,k,j)=\int_0^\infty \frac{1}{r^\alpha}\Phi_k(r)
   \Phi_j(r) \ud r.
\]
Then
\[
  E_1(\beta,k,j)=(2j+1)E_2(\beta+1,k,j)+\sqrt{2(2j+1)j}E_2(\beta+1,k,j-1)
   -E_2(\beta-1,k,j) 
\]
and 
\[
   E_2(\alpha,k,j)= \frac{2P(j)P(k)(-1)^{k+j}}{\sqrt{\pi}}
\sum_{m,p=1}^{k,j}\frac{(-1)^{m+p}\Gamma(\frac{3-\alpha}{2}+m+p)
\begin{pmatrix}k\\m \end{pmatrix}\begin{pmatrix}j\\p \end{pmatrix}}
{m!p!P(m)P(p)}.
\]
\end{lemma}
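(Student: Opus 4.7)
My plan is to prove the two identities independently by explicit calculation, using the Hermite recurrences \eqref{eq:der}, \eqref{eq:recu} and direct Gaussian integration.

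For the first identity, I would begin by establishing the key pointwise relation
\[
\Phi_\nu'(r) = \frac{2\nu+1}{r}\Phi_\nu(r) + \frac{\sqrt{2\nu(2\nu+1)}}{r}\Phi_{\nu-1}(r) - r\Phi_\nu(r)
\]
valid for any $\nu \geq 1$. This follows by differentiating $\Phi_\nu(r) = c_{2\nu+1}^{-1}h_{2\nu+1}(r)e^{-r^2/2}$, using \eqref{eq:der} to rewrite $h_{2\nu+1}'(r) = 2(2\nu+1)h_{2\nu}(r)$, and then invoking \eqref{eq:recu} in the form $2rh_{2\nu}(r) = h_{2\nu+1}(r) + 4\nu h_{2\nu-1}(r)$ to eliminate the even-index Hermite polynomial. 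The numerical coefficient $\sqrt{2\nu(2\nu+1)}$ in front of $\Phi_{\nu-1}$ emerges from the explicit normalisation ratio $c_{2\nu-1}/c_{2\nu+1} = 1/(2\sqrt{2\nu(2\nu+1)})$. Substituting this identity (with $\nu=j$) into $\int_0^\infty r^{-\beta}\Phi_k(r)\Phi_j'(r)\,dr$ and matching the three terms with the appropriate $E_2$'s delivers the right-hand side. If one reads $E_1(\beta,k,j)$ with the derivative acting on $\Phi_k$, a preliminary integration by parts is needed to shift the derivative onto $\Phi_j$; the boundary terms vanish because $\Phi_k(0)=0$ and the Hermite functions decay at infinity.

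For the second identity, I would substitute the explicit polynomial expansion
\[
h_{2k+1}(r) = (-1)^k(2k+1)!\sum_{m=0}^{k}\frac{(-1)^m(2r)^{2m+1}}{(k-m)!\,(2m+1)!},
\]
and its $j$-analogue, into the defining integral for $E_2$. This reduces the problem to evaluating the Gaussian moments $\int_0^\infty r^{2m+2p+2-\alpha}e^{-r^2}dr = \tfrac{1}{2}\Gamma(m+p+\tfrac{3-\alpha}{2})$, which are convergent for $\alpha \in [-1,2]$. The crucial simplification step is the elementary identity
\[
(2m+1)! = 4^m(m!)^2 P(m),
\]
a direct consequence of $P(m) = (2m+1)!!/(2m)!!$, $(2m+1)!=(2m+1)!!\,(2m)!!$, and $(2m)!!=2^m m!$. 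Applying this identity to the $(2m+1)!$ and $(2p+1)!$ in the denominators cancels the $4^{m+p}$ that arises from the $(2r)^{2m+1}(2r)^{2p+1}$ expansion, while the same identity under a square root handles the prefactor $c_{2k+1}c_{2j+1}$ and produces the outer $\sqrt{P(k)P(j)}$ factor. A final rewriting $1/[m!(k-m)!] = \binom{k}{m}/k!$ absorbs the residual factorials in $k$ and $j$ into binomial coefficients and matches the claimed closed form.

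The main obstacle is not conceptual but bookkeeping: each of the calculations involves tracking factorials, powers of two, Gamma values, the Hermite normalisations $c_n$, and the $P(\cdot)$ factors, so that every symbol ends up in the correct slot. A secondary care point is the consistent use of the symmetry $E_2(\alpha,k,j) = E_2(\alpha,j,k)$, which is sometimes needed to reconcile the ordering of the indices produced by the direct calculation with the ordering in the statement.
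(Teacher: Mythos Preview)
Your approach coincides with the paper's: for $E_2$ you expand $h_{2k+1}$ as a polynomial and evaluate the Gaussian moments term by term (your expansion coefficients and $\tfrac12\Gamma(\cdot)$ integrals are exactly the paper's $S_n(k)$ and $K(\alpha,m,p)$), and for $E_1$ you invoke the Hermite recurrences \eqref{eq:der}--\eqref{eq:recu}, which is all the paper says.

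One small correction to your side remark on integration by parts: differentiating $r^{-\beta}\Phi_j$ produces an extra $-\beta r^{-\beta-1}\Phi_j$, so
\[
\int_0^\infty r^{-\beta}\Phi_k'\Phi_j\,\ud r \;=\; \beta\,E_2(\beta+1,k,j)\;-\;\int_0^\infty r^{-\beta}\Phi_k\Phi_j'\,\ud r,
\]
and the derivative does not simply ``shift''. Since your direct substitution with $\nu=j$ already reproduces the stated right-hand side verbatim, the natural reading is that the derivative in the definition of $E_1$ is intended to fall on $\Phi_j$ (or, equivalently, the formula should carry $k$ in place of $j$); no integration by parts is required.
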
 
\begin{proof} 
Let
\[
    S_n(k)=c_{2k+1}^{-1}(2k+1)!\frac{(-1)^{k-n}2^{2n+1}}{(k-n)!
    (2n+1)!}.
\]
Then
\[
    c_{2k+1}^{-1}h_{2k+1}(r)=\sum_{n=0}^k S_n(k)r^{2n+1}
\]
and
\[
    E_2(\alpha,k,j)= \sum_{m,p=1}^{k,j}S_m(k)S_p(j)K(\alpha,m,p)
\]
where
\[
    K(\alpha,m,p)=\int_0^\infty \frac{1}{r^{\alpha}}r^{2m+1}r^{2p+1}
    e^{-r^2}\ud r=\frac{1}{2}\Gamma \Big(\frac{3-\alpha}{2}+m+p\Big).
\]
On the other hand, the expression for $E_1$ follows from 
applying \eqref{eq:der} and \eqref{eq:recu}.
\end{proof}

If $E_3$ and $E_4$ are as in the following lemma and $\phi_{\mathrm{el}}(r)=\frac{1}{1+r^2}$,
then 
\begin{align*}
  F_1(k,j)&=E_3(2k+1,2j+1), \\
  F_2(k,j)&=\frac{1}{2}\Big(\sqrt{\frac{2k+1}{2}}E_4(2k,2j+1)+
  \sqrt{\frac{2j+1}{2}}E_4(2k+1,2j)  \\
  & \qquad -\sqrt{k+1}E_4(2k+2,2j+1)-\sqrt{j+1}E_4(2k+1,2j+2)+\\
  & \qquad E_3(2k+1,2j+1) \Big), \\
  F_3(k,j)&=T_3(k,j)-\sqrt{\frac{2k+1}{2}}E_3(2k,2j+1)-
  \sqrt{k+1}E_3(2k+2,2j+1), \\
  F_4(k,j)&=\sqrt{\frac{2k+1}{2}}E_3(2k,2j+1)-
  \sqrt{k+1}E_3(2k+2,2j+1).
\end{align*}

\begin{lemma}  \label{lemma:inv_harm}
For $m,n\in \N\cup\{0\}$, let $I(m,n)$ be as in Lemma~\ref{lemma:term_I},
\begin{align*} 
E_3(m,n)& =\frac{1}{c_m c_n}
\int_0^\infty \frac{1}{1+r^2}h_m(r)h_n(r)e^{-r^2} \ud r,\\ 
      E_4(m,n)& =\frac{1}{c_mc_n}
      \int_0^\infty \frac{r}{1+r^2}h_m(r)
   h_n(r) e^{-r^2} \ud r.
\end{align*}
Then 
\begin{align*}
    E_3(0,0)&=2e\int_0^1 e^{-t^2} \ud t+e\sqrt{\pi}, \\
    E_4(0,0)&=e\sqrt{\pi}\int_1^\infty \frac{e^{-t}}{t} \ud t, \\
    E_3(m+1,0)&=\frac{1}{\sqrt{m+1}}
    \big(E_4(m,0)-\sqrt{m}E_3(m-1,0)\big), \\
    E_4(m+1,0)&=\frac{1}{\sqrt{m+1}}
    \left(\sqrt{2}I(m,0)-\sqrt{2}E_3(m,0)-\sqrt{m}E_4(m-1,0) \right) ,\\
    E_3(m+1,n+1)&=\frac{1}{\sqrt{(m+1)(n+1)}}
    \big(2I(m,n)-2E_3(m,n)+ \\
    & \qquad -\sqrt{2m}E_4(m-1,n)-
    \sqrt{2n}E_4(m,n-1)+\sqrt{mn}E_3(m-1,n-1)\big) \\
    E_4(m+1,n+1)&=\frac{1}{\sqrt{(m+1)(n+1)}}
    \left(\sqrt{2(n+1)}I(m,n+1)-2E_4(m,n)+ \right. \\
    & \qquad \left. \quad \sqrt{2n}E_3(m,n-1)-
    \sqrt{2m}E_4(m-1,n)+\sqrt{mn}E_3(m-1,n-1)\right).
\end{align*}
\end{lemma}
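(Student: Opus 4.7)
The plan is to reduce everything to the Hermite three-term recurrence \eqref{eq:recu} combined with the elementary identities $r^2/(1+r^2)=1-1/(1+r^2)$ and $r^3/(1+r^2)=r-r/(1+r^2)$. Each of these turns a polynomial-in-$r$ factor appearing in the integrand into a sum of a piece weighted by $e^{-r^2}$ only (which yields an $I(\cdot,\cdot)$ contribution) and a piece still weighted by $1/(1+r^2)$ or $r/(1+r^2)$ (which yields an $E_3$- or $E_4$-contribution). Tracking the normalisation ratios $c_{m+1}/c_m=\sqrt{2(m+1)}$ and $c_{m+1}/c_{m-1}=2\sqrt{m(m+1)}$ is then a purely book-keeping exercise.

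For the two base cases, $h_0\equiv 1$ and $c_0^2=\sqrt{\pi}/2$. Writing $\tfrac{1}{1+r^2}=\int_0^\infty e^{-(1+r^2)t}\,\ud t$ and exchanging the order of integration reduces $\int_0^\infty(1+r^2)^{-1}e^{-r^2}\,\ud r$ to a one-dimensional integral in $t$; the substitution $u=\sqrt{1+t}$ then expresses the result in terms of $\int_1^\infty e^{-u^2}\,\ud u$, which after splitting $\int_1^\infty=\tfrac{\sqrt{\pi}}{2}-\int_0^1$ gives the announced closed form for $E_3(0,0)$. For $E_4(0,0)$, the substitution $u=1+r^2$ transforms $\int_0^\infty \frac{re^{-r^2}}{1+r^2}\,\ud r$ directly into the exponential integral $\tfrac{e}{2}\int_1^\infty \frac{e^{-u}}{u}\,\ud u$, and division by $c_0^2$ yields the stated formula.

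For the boundary recursions, one inserts $h_{m+1}=2rh_m-2mh_{m-1}$ into the integrand. In $E_3(m+1,0)$ this produces the weights $2r/(1+r^2)$ and $2/(1+r^2)$, contributing an $E_4(m,0)$ and an $E_3(m-1,0)$ term respectively. In $E_4(m+1,0)$ one first extracts the extra factor of $r$ and then applies $r^2/(1+r^2)=1-1/(1+r^2)$; this generates an additional $I(m,0)$ term alongside the $E_3(m,0)$ and $E_4(m-1,0)$ contributions. Dividing by $c_{m+1}c_0$ and substituting the ratios $c_{m+1}/c_m$ and $c_{m+1}/c_{m-1}$ recovers the claimed formulae.

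For the bulk recursions, the plan is to apply \eqref{eq:recu} to both $h_{m+1}$ and $h_{n+1}$ simultaneously. Expanding $h_{m+1}h_{n+1}$ produces four terms of schematic form $r^2 h_m h_n$, $r h_m h_{n-1}$, $r h_{m-1}h_n$, $h_{m-1}h_{n-1}$; multiplying by $1/(1+r^2)$ (for $E_3$) or $r/(1+r^2)$ (for $E_4$) and applying the two algebraic identities above splits every term into an $I$-contribution with weight $e^{-r^2}$ and an $E_3$- or $E_4$-contribution with strictly smaller index in at least one coordinate. Collecting the resulting six or seven terms and using the ratios $c_m c_n/(c_{m+1}c_{n+1})=(2\sqrt{(m+1)(n+1)})^{-1}$ together with the analogous ratios involving $c_{m-1}$ and $c_{n-1}$ recovers the coefficients $2$, $\sqrt{2m}$, $\sqrt{2n}$, $\sqrt{mn}$ in the lemma. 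The main obstacle is not analytical but combinatorial: one must pair the normalisation factors carefully so that, after the cancellations provided by the two algebraic identities, the expression collapses to exactly the five named quantities on the right-hand side of each recursion. No further ingredient beyond \eqref{eq:recu} and these two elementary identities is needed.
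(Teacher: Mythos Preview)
Your approach is correct and is exactly what the paper's one-line proof (``The recursions for $E_3$ and $E_4$ follow from \eqref{eq:recu}'') has in mind: apply the three-term recurrence to $h_{m+1}$ (and, for the bulk case, simultaneously to $h_{n+1}$), then use $r^2/(1+r^2)=1-1/(1+r^2)$ and $r^3/(1+r^2)=r-r/(1+r^2)$ to peel off the $I(\cdot,\cdot)$ contributions, tracking the ratios $c_{m+1}/c_m=\sqrt{2(m+1)}$ throughout. You have in fact supplied considerably more than the paper does, since the paper gives no argument at all for the two base values $E_3(0,0)$ and $E_4(0,0)$.
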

\begin{proof}
The recursions for $E_3$ and $E_4$, follow from
\eqref{eq:recu}.
\end{proof}

\newpage
\begin{center}
{\bf Acknowledgements}
\end{center}
The authors would like to thank Jean Dolbeault, Mathieu Lewin, Michael Levitin and \'Eric S\'er\'e for fruitful discussions during the preparation of this work.

The first author is grateful for the hospitality of CEREMADE and Universit\'e de Franche-Comt\'e. The second author has been partially supported by ESPRC grant EP/D054621.

\bibliographystyle{alpha}
\bibliography{biblio}

\newpage

\begin{figure}[h!]  
\hspace{.5cm}{\includegraphics[width=15cm]{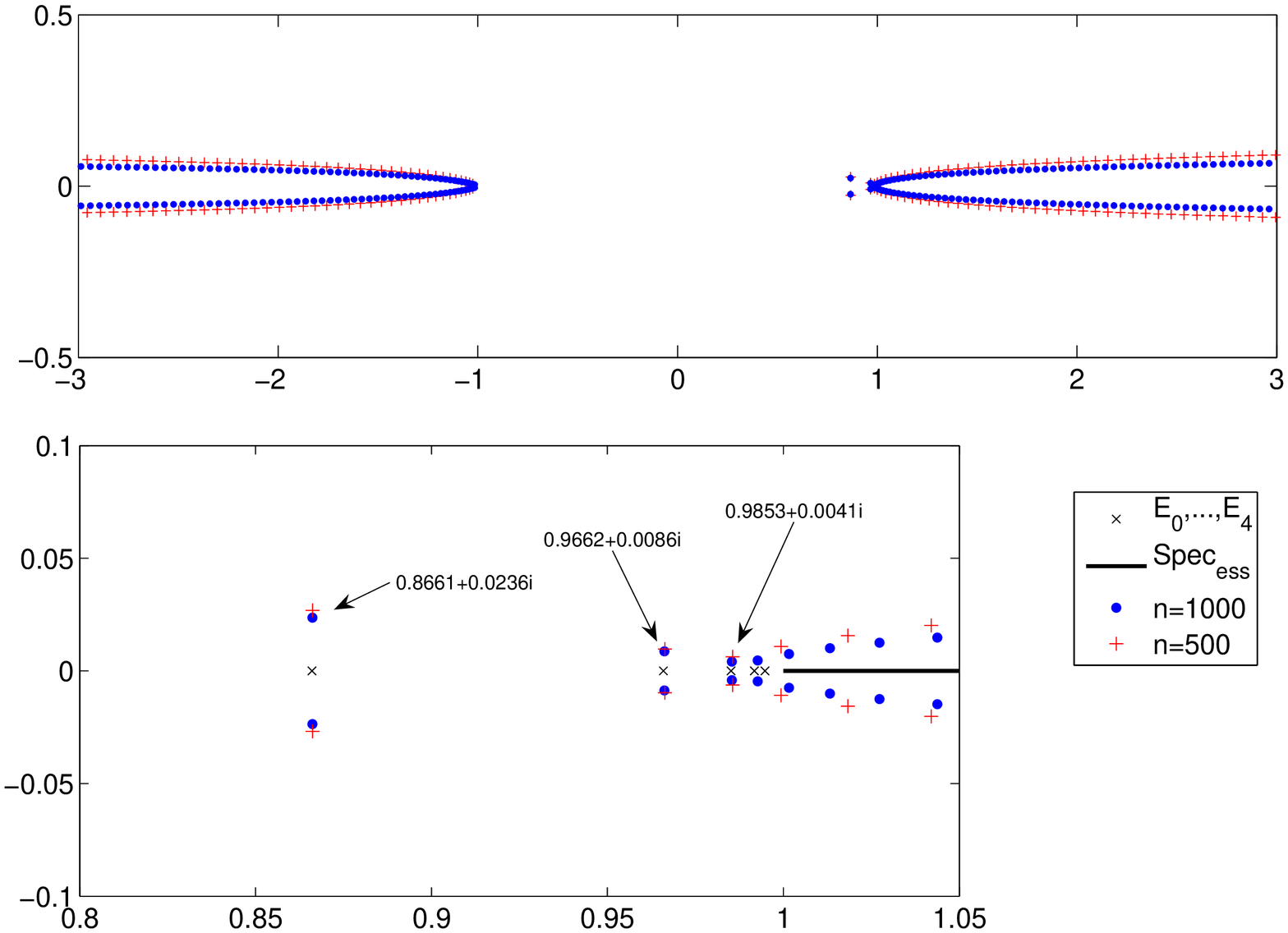}}
\caption{Portion of $\Spec_2(H_{-1},\mathcal{L}_n)$ in 
thin boxes around $[-3,3]$ for the purely coulombic potential 
with $\gamma=-1/2$. The bottom image shows details of the
picture near $1$. Here we superimpose two values of $n$, $500$ and $1000$. 
According to Theorem~\ref{non_pollution} there are approximate energy
states at
$E\approx 0.8661\pm 0.0236$, $E\approx 0.9662\pm 0.0086$ and
$E\approx 0.9853\pm 0.0041$. These correspond to the actual 
eigenvalues 
$E_0\approx 0.866025$, 
$E_1\approx 0.965925$ and 
$E_2\approx 0.985121$. }\label{fig:1}
\end{figure}

\newpage

\begin{figure}[h!]
\centerline{\includegraphics[height=8cm]{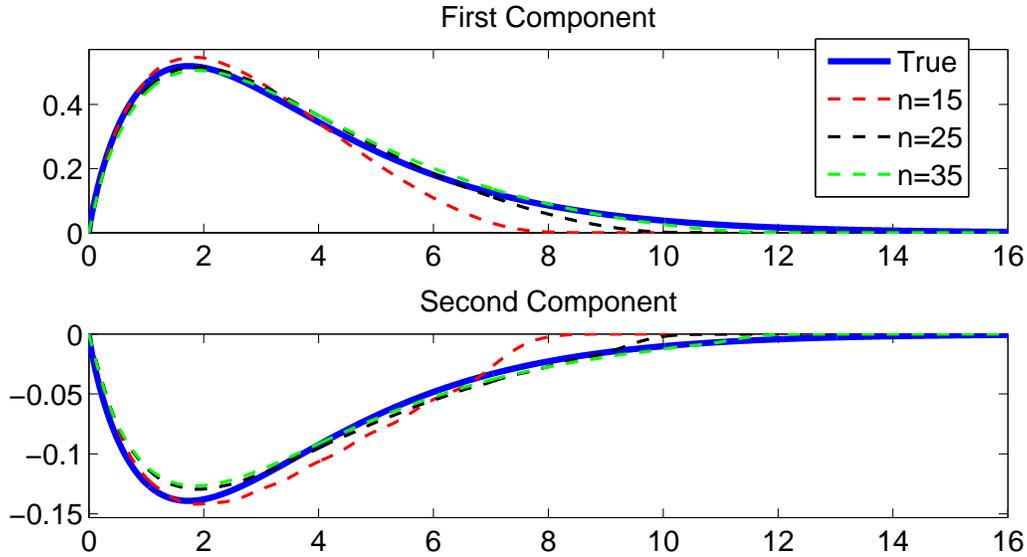}}
\caption{Approximate ground wave function for a
purely coulombic potential with $\gamma=-1/2$. The true wave function
(blue line) can be found explicitly, \cite[Section~7.4.2]{Thaller}.
The ground state in this case is $E_0=\sqrt{1-\gamma^2}\approx 0.866025$.
We have deliberately chosen small dimensions $n$ for the test spaces,
in order to illustrate approximation of the method.
The residual error is actually much smaller that the one
predicted by Theorem~\ref{non_pollution}, see 
Table~\ref{table:1}.}\label{fig:2}
\end{figure}

\begin{table}[h!]
\centerline{\begin{tabular}{crr} $n$ & $\frac{\|v-\Pi_{\mathcal{E}} v\|}{\|v\|}$  &
$\frac{|\Im(\lambda)|}{d_E}$ \\ \hline $15$ & $0.176115$ &
$0.680599$ \\ $25$ & $0.084527$ & $0.514205$ 
\\  $35$ & $0.072552$ & $0.457034$
\end{tabular}}
\caption{Here we compare both sides
of \eqref{eq_approx_efu}, for the computation of the approximate
eigenfunctions of Figure~\ref{fig:2}. 
We approximate $d_E=E_1-E_0\approx 0.0999004$.} \label{table:1}
\end{table}

\newpage

\begin{figure}[h!]
\centerline{\includegraphics[height=6cm]{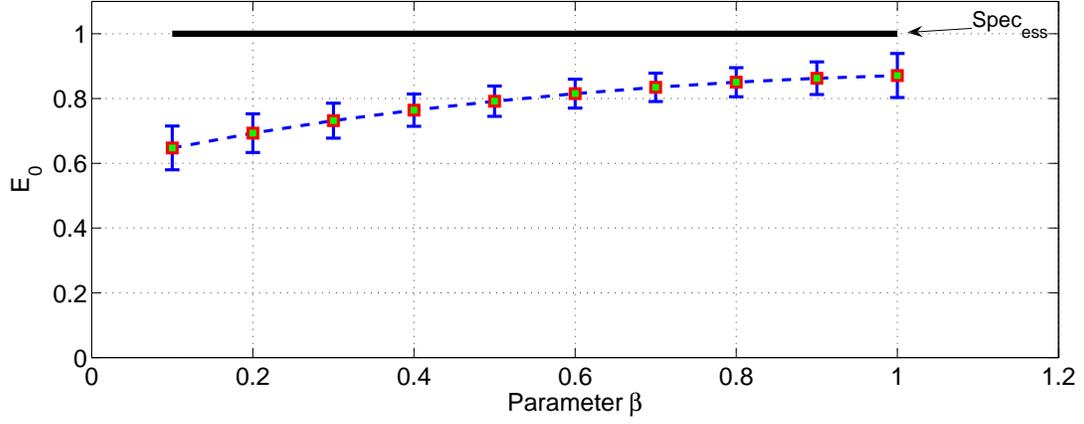}}
\caption{Computation of the ground energy value for $H_{-1}$ and
$\phi_{\mathrm{el}}(r)=-\frac{1}{2r^\beta}$. We depict $E_0$ against $\beta$.
The vertical bars correspond to the error predicted by 
Theorem~\ref{non_pollution}.}\label{fig:3}
\end{figure}
\begin{table}[h!]
\centerline{\begin{tabular}{crr} $\beta$ & $E_0$  &
$\Im(\lambda)$ \\ \hline 
    0.1  &  0.6474 &   0.0675 \\
    0.2  &  0.6932 &   0.0599 \\
    0.3  &  0.7316 &   0.0542 \\
    0.4  &  0.7642 &   0.0499\\
    0.5  &  0.7918 &   0.0468\\
    0.6  &  0.8151 &   0.0448\\
    0.7  &  0.8346 &   0.0439\\
    0.8  &  0.8505 &   0.0449\\
    0.9  &  0.8627 &   0.0504\\
    1.0  &  0.8711 &   0.0680
\end{tabular}}
\caption{Data depicted in Figure~\ref{fig:3}} \label{table:2}
\end{table}

\newpage

\begin{figure}[ht!]
\centerline{\includegraphics[height=10cm]{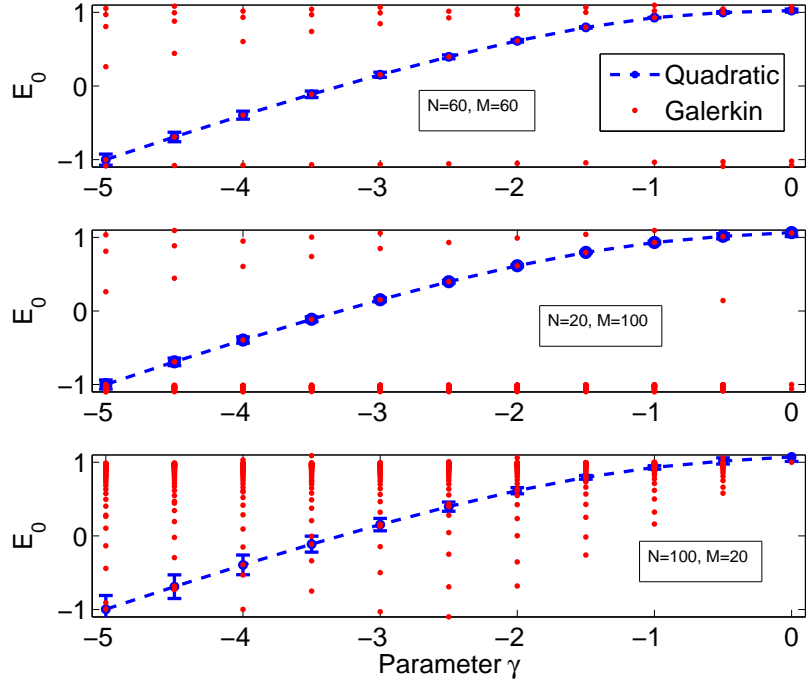}}
\caption{The graph captures the evolution of $E_0$ as it crosses the
spectral gap of $H_{-1}$ where
$\phi_{\mathrm{el}}(r)=\gamma/(1+r^2)$ for $\gamma=-5:.5:0$. We consider three choices of pairs $(N,M)$ such that $\dim(\mathcal{L}_{NM})=120$. The curve corresponds to $\Re(\lambda_n)$ for $\lambda=\lambda_n$ in \eqref{eq_approx_efu} and \eqref{convergence_rate}. The vertical bars on the curve measure $|\Im(\lambda_n)|$ . We superimpose the image with the eigenvalues of $L$ in
\eqref{eq_coeff_qpm} for $G=H_{-1}$, that is the Galerkin approximation.}
\label{fig:evo_smooth_gamma_neg}
\end{figure}

\newpage

\begin{figure}[ht!]
\centerline{\includegraphics[height=12cm]{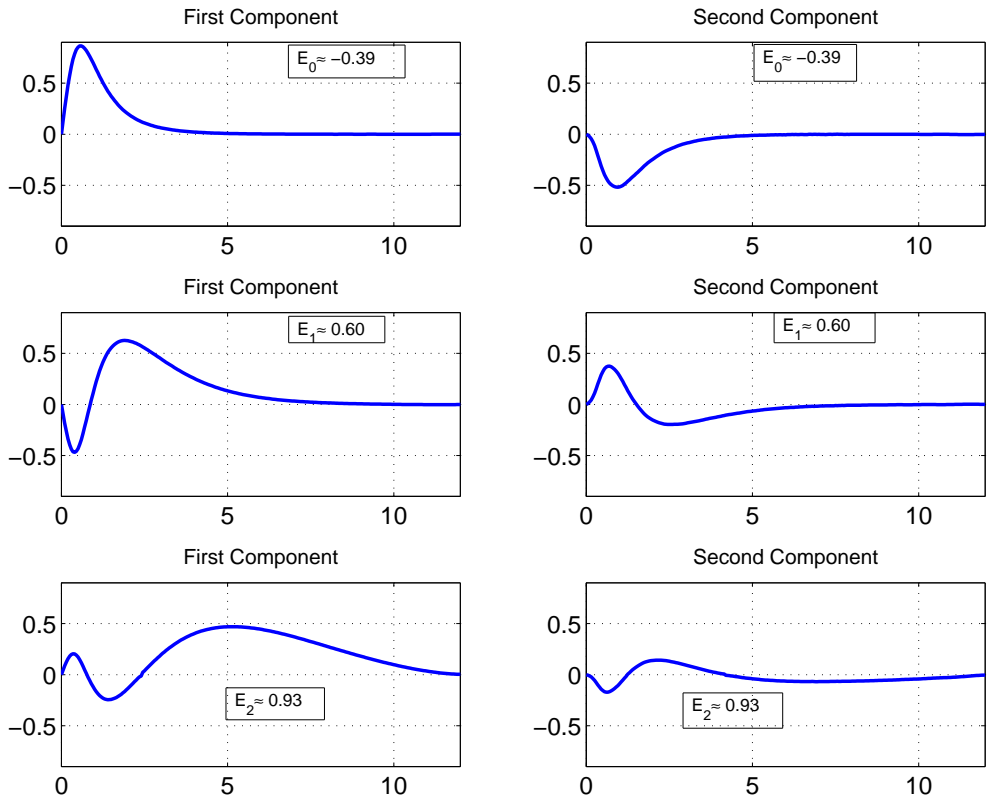}}
\caption{Here we use Theorem~\ref{non_pollution} to find the
first three eigenfunctions of $H_{-1}$ with
$\phi_{\mathrm{el}}(r)=-4/(1+r^2)$. The numerical evidence suggests:
$E_0\approx -0.3955$, $E_1\approx0.6049$ and $E_2\approx 0.9328$.}
\label{fig:eigenfu_smooth}
\end{figure}

\newpage

 \begin{figure}[ht!]
\centerline{\includegraphics[height=10cm]{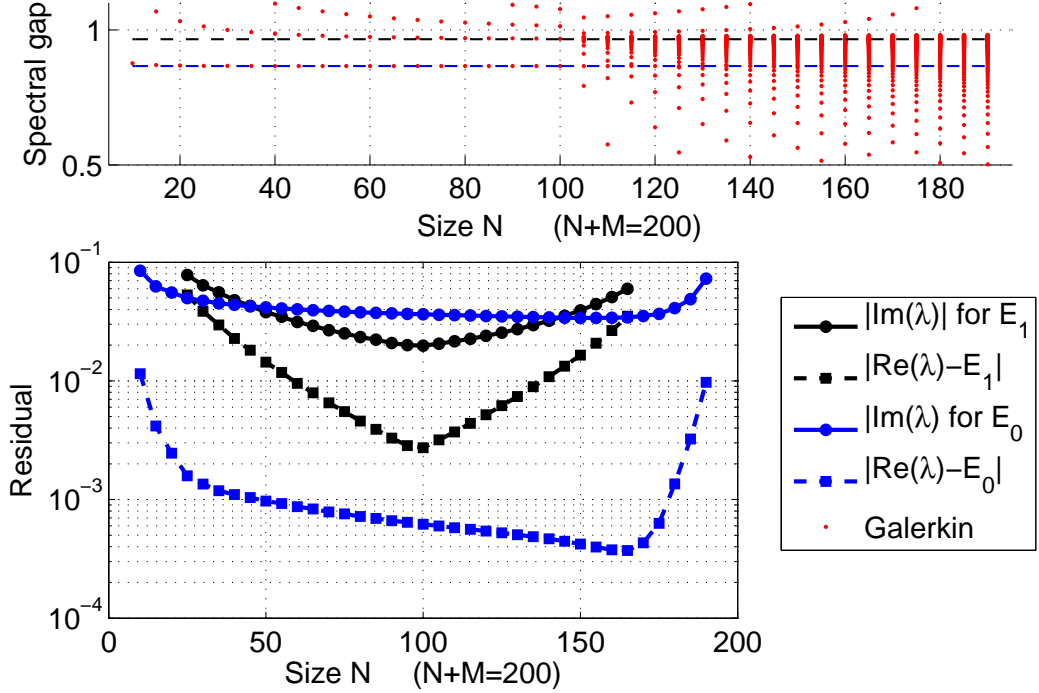}}
\caption{Here $E_0$ and $E_1$ are eigenvalues of $H_{-1}$ for 
$\phi_{\mathrm{el}}(r)=-\frac{1}{2r}$. 
The top graph shows the eigenvalues of $L$ in
\eqref{eq_coeff_qpm} (that is the Galerkin approximation) for $G=H_{-1}$ and  $(M,N)=(N,200-N)$ so that $\dim(\mathcal{L}_{NM})=200$.
The bottom graph depicts the residuals  $|\Im(\lambda_n)|$ and
$|\Re(\lambda)-E_j|$. For $E_0$, the minimum of the residual curve
corresponding to $|\Im(\lambda)|$ is achieved when $N\approx 155$ and it is roughly $7\%$ smaller than when $N=100$.  For the same eigenvalue,
the residual curve corresponding to $|\Re(\lambda)-E_0|$ achieves its minimum when $N=165$ and it is roughly $66\%$ smaller than when $N=100$.}
\label{fig:unbalance_coulomb}
\end{figure}

\newpage

 \begin{figure}[ht!]
\centerline{\includegraphics[height=10cm]{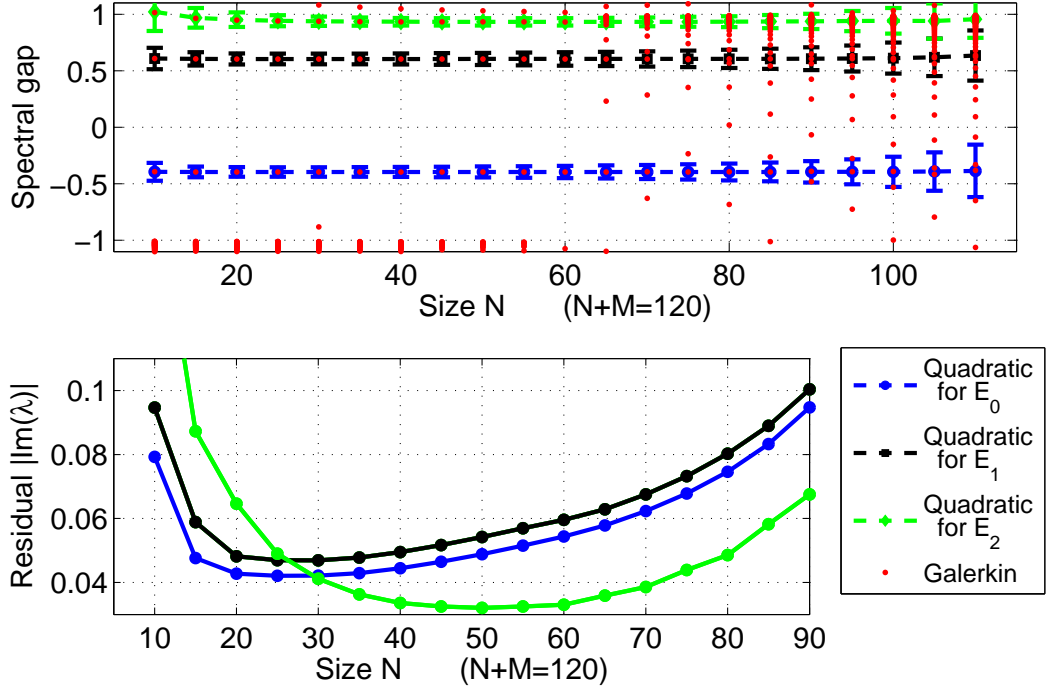}}
\caption{Here $E_0,\,E_1$ and $E_2$, are the first three eigenvalues of
$H_{-1}$ for $\phi_{\mathrm{el}}(r)=-4/(1+r^2)$. The top graph shows approximation of $E_0\approx -0.3955$, $E_1\approx0.6049$ and $E_2\approx 0.9328$, for  $(M,N)=(N,120-N)$ so that $\dim(\mathcal{L}_{NM})=120$. 
The curves correspond to $\Re(\lambda_n)$ for $\lambda=\lambda_n$ in \eqref{eq_approx_efu} and \eqref{convergence_rate}. The vertical bars measure $|\Im(\lambda_n)|$. The image is superimposed with the eigenvalues of $L$ in
\eqref{eq_coeff_qpm} for $G=H_{-1}$, that is the Galerkin approximation.
The bottom graph depicts the residuals  $|\Im(\lambda_n)|$.}
\label{fig:evo_unbalance}
\end{figure}

 \newpage

\begin{figure}[ht!]
\centerline{\includegraphics[height=7cm]{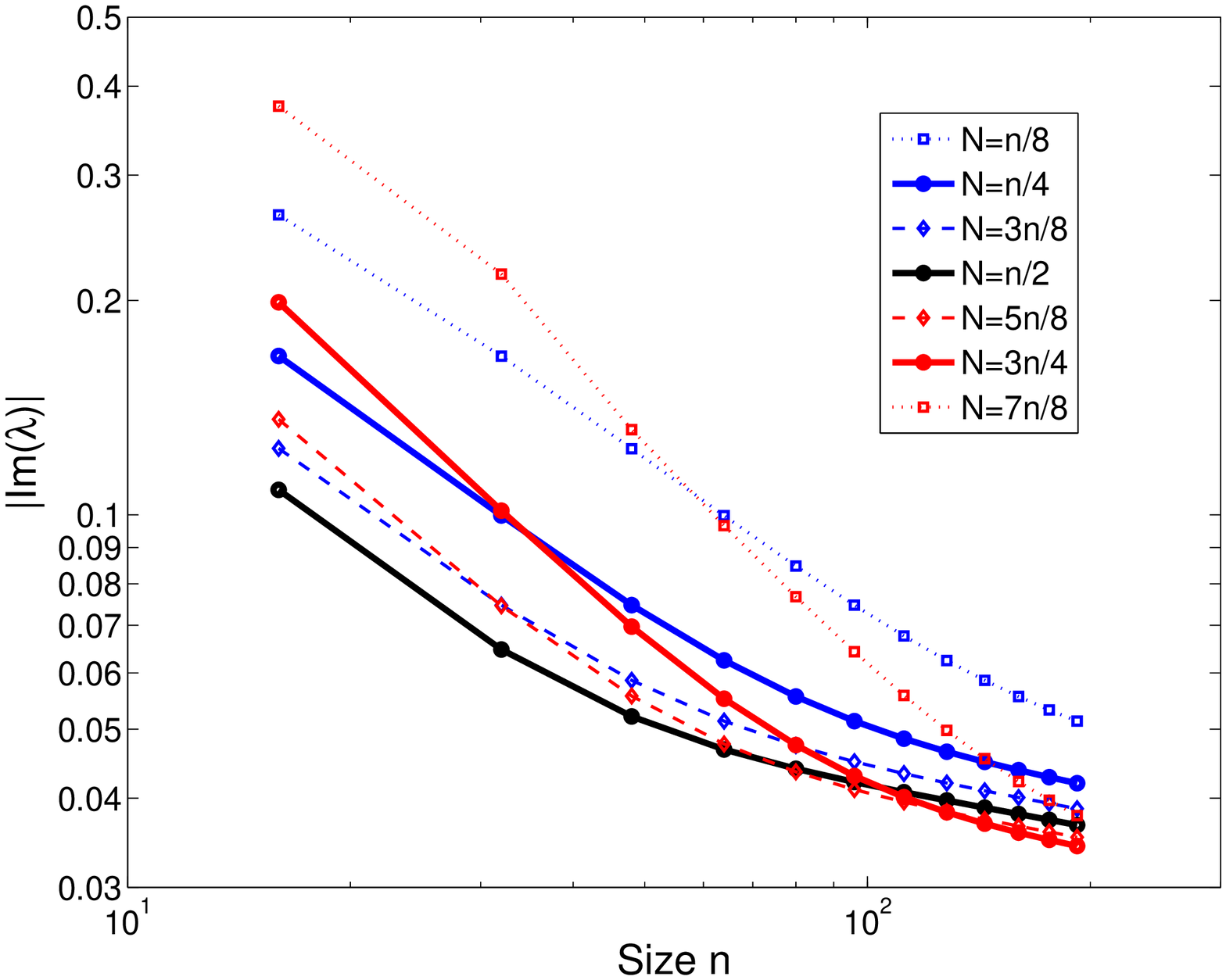}
\includegraphics[height=7cm]{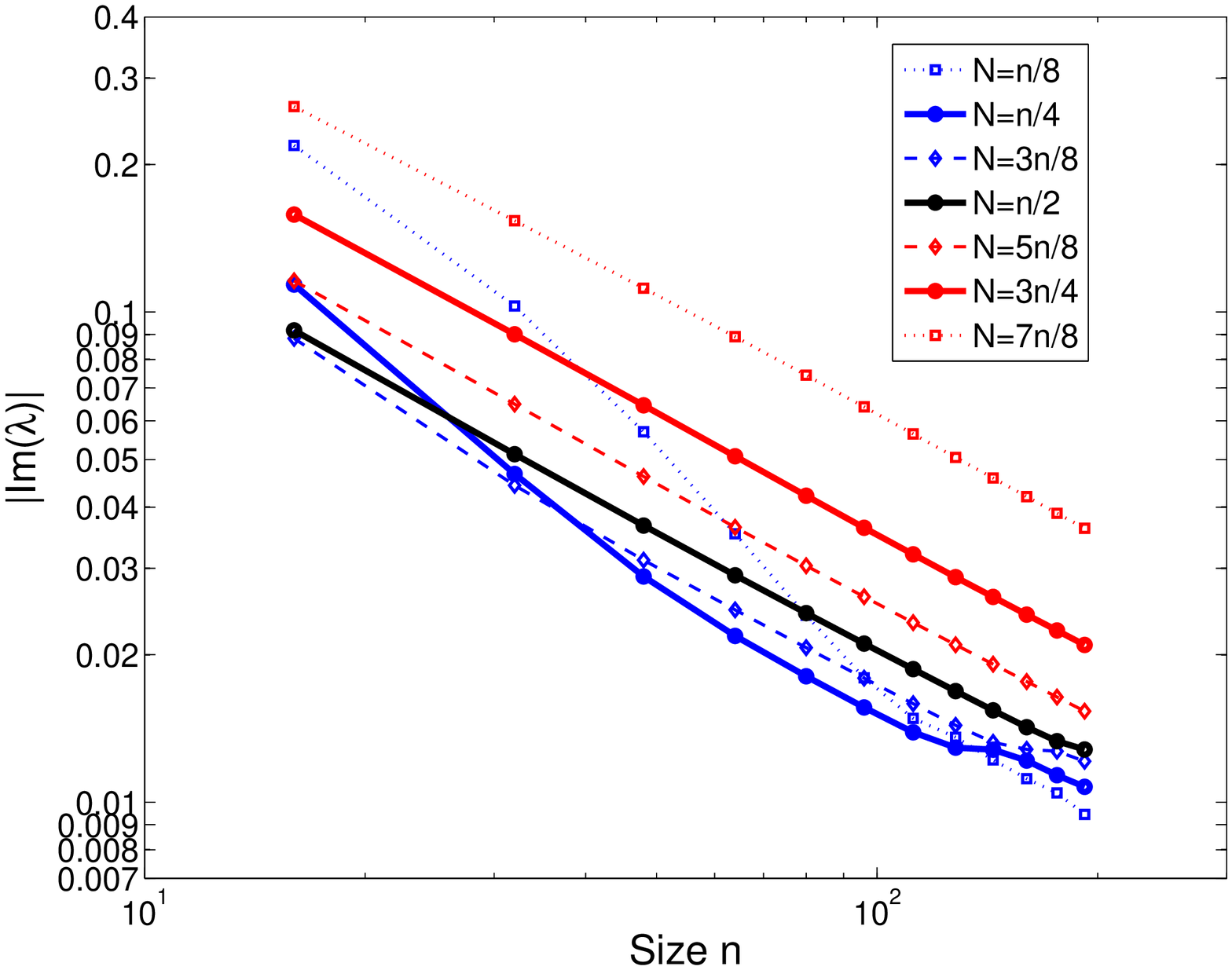}}
\caption{Log-log plots of $|\Im(\lambda)|$ for 
$\Re(\lambda)$ close to an eigenvalue, $E_0$, for different choices of pairs $(N,M)$ as $n=N+M$ increases. Left:
$\kappa=-1$, $\phi_{\mathrm{el}}(r)=-\frac{1}{2r}$ and $E_0\approx 0.86602$. Right: $\kappa=-1$,
$\phi_{\mathrm{el}}(r)=-\frac{2}{1+r^2}$ and $E_0\approx 0.61399$.
See Table~\ref{table:expo1}.}
\label{fig:rate_unbalance}
\end{figure}
\begin{table}[h!]
\centerline{\begin{tabular}{ccc} $N$ & $a$ &
$b$ \ \\ \hline 
    $n/8$ &  -0.6736  &  1.6766\\
    $n/4$ &  -0.5426  &  0.6555\\
    $3n/8$ &  -0.4385  &  0.3530\\
    $n/2$ &  -0.3963  &  0.2703\\
    $5n/8$ &  -0.5064  &  0.4478\\
    $3n/4$ &  -0.6903  &  1.1115\\
    $7n/8$ &  -0.9609  &  5.4520
\end{tabular}
\hspace{1cm}\begin{tabular}{ccc} $N$ & $a$ &
$b$ \ \\ \hline 
    $n/8$ &  -1.3241  &  8.8276  \\
    $n/4$ &  -0.9135  &  1.1303  \\
    $3n/8$ &  -0.7990  &  0.7223  \\
    $n/2$ &  -0.7979  &  0.8155  \\
    $5n/8$ &  -0.8125  &  1.0825  \\
    $3n/4$ &  -0.8163  &  1.5171  \\
    $7n/8$ &  -0.8004  &  2.4558
\end{tabular}}
\caption{In this table we fit by least squares the data of Figure~\ref{fig:rate_unbalance} and find $a$ and $b$ such that
$|\lambda_{n}-E_0|\leq |\Im(\lambda_{n})|\sim b n^{a}$ for $n=N+M$.} \label{table:expo1}
\end{table}

\end{document}